\theoremstyle{plain}
\newtheorem{theorem}{Theorem}[section]
\newtheorem{corollary}[theorem]{Corollary}
\newtheorem{lemma}[theorem]{Lemma}
\newtheorem{proposition}[theorem]{Proposition}
\theoremstyle{definition}
\newtheorem{definition}[theorem]{Definition}
\newtheorem{example}[theorem]{Example}
\newtheorem{remark}[theorem]{Remark}
\newcommand{\ses}{\searrow \!\!\! \searrow}
\def\co{\colon\thinspace}
\newcommand{\op}{\mathrm{op}}
\newcommand{\ob}{\mathrm{ob}}
\newcommand{\mor}{\mathrm{mor}}
\newcommand{\id}{\mathrm{id}}
\newcommand{\sd}{\mathrm{sd}}
\newcommand{\A}{\mathcal{A}}
\newcommand{\B}{\mathcal{B}}
\newcommand{\cS}{\mathcal{S}}
\newcommand{\C}{\mathcal{C}}
\newcommand{\bs}{\backslash}
\begin{document}
	
	\author{Kohei Tanaka}
	\address{Institute of Social Sciences, School of Humanities and Social Sciences, Academic Assembly, Shinshu University, 3-1-1 Asahi, Matsumoto, Nagano 390-8621, Japan.}
	\email{tanaka@shinshu-u.ac.jp}
	\title{Strong homotopy types of acyclic categories and $\Delta$-complexes}

	\maketitle
	\thispagestyle{empty}

	{\footnotesize 2010 Mathematics Subject Classification : 06A07, 52B05, 57Q10}
	
	{\footnotesize Keywords : strong homotopy, acyclic category, $\Delta$-complex}

	\begin{abstract}
		We extend the homotopy theories based on point reduction for finite spaces and simplicial complexes to finite acyclic categories and $\Delta$-complexes, respectively. The functors of classifying spaces and face posets are compatible with these homotopy theories. 
		In contrast with the classical settings of finite spaces and simplicial complexes, the universality of morphisms and simplices plays a central role in this paper.	
	\end{abstract}

	\section{Introduction}
	
	The homotopy theory for finite $T_0$ topological spaces (finite spaces), which was developed by Stong \cite{Sto66}, is an important tool in combinatorial algebraic topology \cite{Bar11}. It is based on successive removal of points and can be described in purely combinatorial terms.
	For a finite space, Stong found a minimal subspace with the same homotopy type as the original space. It is called a {\em core} and is uniquely determined up to isomorphism.
	A similar notion is known as the strong homotopy theory for finite simplicial complexes and is based on vertex reduction. Barmak and Minian studied the relation between these homotopy theories in \cite{BM12}. They showed that order complexes and face posets are compatible with the homotopy theories of finite spaces and the strong homotopy theory of simplicial complexes.
	
	This paper develops Barmak and Minian's work to acyclic categories and $\Delta$-complexes. We are only concerned with finite acyclic categories and finite $\Delta$-complexes in this paper.
	A finite space can be regarded as a finite partially ordered set (poset), and a category with at most one morphism for each pair of objects and no loops of morphisms. An acyclic category is a generalization of a poset from the categorical viewpoint. Namely, it is a category without loops of morphisms allowed to have multiple morphisms. 
	Let us recall the strong homotopy theory for categories based on natural transformations \cite{Lee73}, \cite{Hof74}. As in the case of finite spaces, removing objects of an acyclic category $\A$ plays an important role for the strong homotopy theory. There is a unique, up to isomorphism, minimal subcategory of $\A$ that has the same strong homotopy type as $\A$.

	Moreover, we consider generalizations of simplicial complexes that are called {\em unordered $\Delta$-complexes}. An ordered $\Delta$-complex ($\Delta$-set \cite{RS71}, trisp \cite{Koz08}, semi-simplicial complex \cite{EZ50}) is defined as a simplicial set without degeneracy maps. It is equipped with face maps satisfying the simplicial relation and we distinguish the $i$-th vertex of an $n$-simplex for $i=0,1,\ldots,n$. An ordered simplicial complex can be naturally considered an ordered $\Delta$-complex.

	If we are not interested in the ordering of vertices, we obtain the weaker notion of unordered $\Delta$-complexes. Even though this notion was introduced in Appendix of Hatcher's book \cite{Hat02} as a special type of a regular cell complex; we describe it in combinatorial terms herein.  A standard (unordered) simplicial complex is a special case of an unordered $\Delta$-complex. As this paper mainly focus on unordered $\Delta$-complexes rather than ordered $\Delta$-complexes, an unordered $\Delta$-complex will be referred to as a $\Delta$-complex, in accordance with the case of simplicial complexes.
	As in the case of acyclic categories, we establish the strong homotopy theory for $\Delta$-complexes and characterize it by vertex reduction. Furthermore, a minimal subcomplex of a $\Delta$-complex $X$ with the same strong homotopy type as $X$ is uniquely determined up to isomorphism.
	
	For an acyclic category $\A$, the {\em classifying space} $B\A$ is a $\Delta$-complex consisting of sequences of composable morphisms. When $\A$ is a poset, the classifying space becomes a simplicial complex and is occasionally called the {\em order complex}. Moreover, for a $\Delta$-complex $X$, the {\em face poset} $\chi(X)$ consists of the simplices of $X$ and is ordered by inclusion.
	We study the relation between these functors and strong homotopy types. In particular, we focus on strong collapsibility (strong homotopy type of a single point) of acyclic categories and $\Delta$-complexes. The functors of classifying spaces and face posets are completely compatible with strong collapsibility.

	\begin{theorem}[Theorem \ref{B_collapse}, \ref{chi_collapse}]
	Let $\A$ be an acyclic category and let $X$ be a $\Delta$-complex.
	\begin{itemize}
		\item $\A$ is strongly collapsible if and only if $B\A$ is strongly collapsible.
		\item $X$ is strongly collapsible if and only if $\chi(X)$ is strongly collapsible.
	\end{itemize}
	\end{theorem}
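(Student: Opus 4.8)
The plan is to prove both biconditionals in parallel, reducing each to two structural properties of the relevant functor: that it carries an elementary strong collapse to a strong collapse, and that it carries a minimal object to a minimal one. Throughout I will use the reformulation, available because cores are unique up to isomorphism, that an acyclic category (resp. a $\Delta$-complex) is strongly collapsible precisely when its core is a single point $\ast$.

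For the forward directions I would argue that universality produces domination. Suppose $\A$ elementary-strong-collapses by deleting an object $a$; by hypothesis this is witnessed by a universal morphism at $a$, say an initial object $a\to c$ of the coslice $a\downarrow\A$. Then every maximal chain of $B\A$ through $a$ must contain $c$: any composable string $a\to x\to\cdots$ factors through $a\to c$, so maximality forces $c$ to appear immediately after $a$, and a string ending at $a$ cannot be maximal at all. Hence the vertex $a$ is dominated, and since the chains of $\A$ avoiding $a$ are exactly the chains of $\A\smallsetminus a$, deleting the open star of $a$ yields $B(\A\smallsetminus a)$. Iterating, a strong collapse of $\A$ onto $\ast$ induces one of $B\A$ onto $B(\ast)=\ast$. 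The mirror argument applies to $\chi$: a universal simplex witnessing that a vertex $v$ is dominated in $X$ lets one strip off, by a sequence of beat-object removals, all objects of $\chi(X)$ indexed by simplices through $v$, so that $\chi(X)$ strongly collapses onto $\chi(X\smallsetminus\mathrm{st}(v))$; induction then gives strong collapsibility of $\chi(X)$ from that of $X$.

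The reverse directions are where I expect the real difficulty, the obstacle being that a strong collapse of $B\A$ (resp. of $\chi(X)$) need not pass through subcomplexes of the form $B\A'$ (resp. face posets of subcomplexes), so it cannot be pulled back one object at a time. I would instead argue through cores. Assuming $B\A$ strongly collapsible, let $\A_0$ be the core of $\A$; by the forward direction $B\A$ strongly collapses onto $B\A_0$, so $B\A_0$ is itself strongly collapsible. The key lemma I would establish is that $B$ sends a minimal acyclic category to a minimal $\Delta$-complex (and $\chi$ sends a minimal $\Delta$-complex to a minimal acyclic category). Granting it, $B\A_0$ is minimal, hence its own core; being strongly collapsible, that core is $\ast$, so $B\A_0=\ast$, which forces $\A_0=\ast$ and therefore $\A$ strongly collapsible. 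The case of $\chi$ is identical.

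Everything thus hinges on the key lemma, and this is exactly the point at which the universality hypothesis is indispensable and the classical story breaks down. What must be shown is a converse to the forward step: a dominated vertex of $B\A$ must be produced by a genuinely universal morphism of $\A$, and a beat object of $\chi(X)$ by a universal simplex of $X$, so that no minimal source can acquire a collapsible image. In the rigid settings of posets and simplicial complexes this converse may fail — a free face, for instance, yields a beat object of the face poset without giving a dominated vertex — which is precisely why domination does not interact cleanly with order complexes and face posets there. Passing to acyclic categories and $\Delta$-complexes is designed so that the additional morphisms and parallel simplices record exactly the factorizations needed to detect universality after applying $B$ or $\chi$. I would prove the lemma directly: from an alleged dominating vertex (resp. minimal coface) in the image I would reconstruct, using the multiplicities of the simplices involved, the required universal arrow in the source, contradicting minimality unless the source is already $\ast$.
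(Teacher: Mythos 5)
Your treatment of the first bullet follows the paper's own route: the forward direction via beat objects becoming dominated vertices (Proposition \ref{beat_dominate} and Proposition \ref{B>>}), and the converse via cores plus the lemma that $B$ preserves minimality (Theorem \ref{B_minimal}); that lemma is true, though its proof is the real work and you only gesture at it. The genuine gap is in the second bullet: your key lemma there --- that $\chi$ sends a minimal $\Delta$-complex to a minimal poset --- is \emph{false}, and the paper says so explicitly in the Remark following Corollary \ref{chi_minimal_delta}. Barmak and Minian's simplicial decomposition of the $2$-simplex that is collapsible but not strongly collapsible is a minimal simplicial complex, hence a minimal $\Delta$-complex (a simplicial complex is a $\Delta$-complex, and the two notions of domination agree on it), yet any free face of it is a beat object of the face poset, so $\chi(X)$ is not minimal. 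Your closing paragraph misdiagnoses exactly this point: the free-face phenomenon does not disappear on passing to $\Delta$-complexes, precisely because every simplicial complex already is one; the extra parallel simplices give you nothing when the complex has none. Consequently your deduction ``$\chi(X_0)$ is strongly collapsible and minimal, hence a point'' has no support, and the reverse implication $\chi(X) \ses * \Rightarrow X \ses *$ is unproven in your proposal.

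The paper gets around this obstruction by a different mechanism: it proves that strong collapsibility of a $\Delta$-complex is invariant under barycentric subdivision (Theorem \ref{sd_delta_collapse}: $X \ses *$ if and only if $\sd(X) = B\chi(X) \ses *$), via a delicate induction showing that in any strong collapse of $\sd(Y)$, for $Y$ a core, the barycenters of vertices and of maximal simplices of $Y$ can never be removed unless $Y$ is already a point. Then $\chi(X) \ses *$ gives $B\chi(X) \ses *$ by Proposition \ref{B>>}, and Theorem \ref{sd_delta_collapse} converts that into $X \ses *$. This is the step you would need in place of the false minimality lemma. A smaller repair is also needed in your forward direction for $B$: inferring that the vertex $a$ is dominated from ``every maximal chain through $a$ contains $c$'' is invalid for $\Delta$-complexes (Remark \ref{maximal_delta_simplicial}: the two-vertex circle satisfies the maximal-simplex criterion yet has no dominated vertex); one must exhibit, for each simplex $\sigma$ through $a$, the coface $\tau$ with $V(\tau) = V(\sigma)\cup\{c\}$ and prove its \emph{uniqueness}, which is exactly what the uniqueness clause of the beat morphism supplies in the paper's Proposition \ref{beat_dominate}.
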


	The remainder of this paper is organized as follows: In Section 2, we recall some basic notions and properties related to acyclic categories. Using the universality of morphisms, we introduce reducible objects of an acyclic category without changing the strong homotopy type. They are called {\em beat objects}. 
	We obtain a minimal subcategory by successively deleting all beat objects. This is determined uniquely up to isomorphism, regardless of the order in which the points are removed.

	Section 3 includes a similar discussion for $\Delta$-complexes. Using the universality of simplices, we introduce reducible vertices without changing the strong homotopy type. They are called {\em dominated vertices}. 
	A minimal $\Delta$-subcomplex is uniquely obtained, up to isomorphism, by successively removing all dominated vertices.

	In Section 4, we study the homotopy type of classifying spaces and face posets. The barycentric subdivisions of acyclic categories and $\Delta$-complexes can be defined using classifying spaces and face posets. The barycentric subdivision of an acyclic category becomes a poset and the barycentric subdivision of a $\Delta$-complex becomes a simplicial complex. 
This relates our strong homotopy theory for acyclic categories and $\Delta$-complex to the classical one for finite spaces (posets) and simplicial complexes.

	 \begin{figure}[ht]
  \begin{center}
   \begin{tikzpicture}
	\draw [fill,lightgray] (1,0) circle (1);
	\draw [fill] (0,0) circle (2pt);
	\draw [fill] (1,0) circle (2pt);
	\draw [fill] (2,0) circle (2pt);
	\draw (0,0) -- (1,0) -- (2,0);
	\draw (1,0) circle (1); 
	\draw (-0.2,0.15) node {$v$};
	\draw (1.2,0.2) node {$v'$};

	\draw [->]  (2.6,0.25) -- (3.1,-0.25);
	\draw [->]  (2.8,0.25) -- (3.3,-0.25);
	
	\draw [fill] (4,0) circle (2pt);
	\draw [fill] (5,0) circle (2pt);
	\draw (4,0) -- (5,0);
	\draw (4.2,0.2) node {$v'$};  
  \end{tikzpicture}  
  \end{center}
  \caption{Elementary strong collapse on $2$-disc associated with a dominated vertex $v$}
  \label{figure1}
 \end{figure}
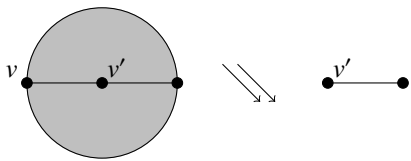

	This paper provides a natural extension of the classical notions and Barmak-Minian's work on finite spaces and simplicial complexes. However, it is essential that we pay attention to the universality of morphisms and simplices. A poset has a unique morphism for each pair of objects (if it exists) and a simplex of a simplicial complex is uniquely determined by its vertices. This is in contrast with the classical strong homotopy theory.

	\section{Strong homotopy types of acyclic categories}

	For a category $\A$, let $\ob(\A)$ denote the set of objects and $\mor(\A)$ the set of morphisms of $\A$. Moreover, for a pair of objects $x$ and $y$, the set of morphisms starting at $x$ and ending at $y$ is denoted by $\A(x,y)$.
	This paper is mainly concerned with acyclic categories without loops of morphisms.

	\begin{definition}
	A category $\A$ is called {acyclic} if it satisfies the following two conditions:
		\begin{enumerate}
			\item $\A(x,y)=\emptyset$ if $\A(y,x) \neq \emptyset$ for each pair of objects $x \neq y$.
			\item The set of endomorphisms $\A(x,x)$ consists of the identity only, for each object $x$.
		\end{enumerate}
	\end{definition}

	An important example of acyclic categories is the class of posets.
A poset $P$ can be regarded as an acyclic category with at most one morphism: $P(x,y)$ consists of one morphism when $x \geq y$ and $P(x,y) = \emptyset$ otherwise.
	Furthermore, a finite poset can be viewed as a finite topological space. The homotopy theory of finite spaces was developed by Stong \cite{Sto66}. 
	He showed that two maps $f,g$ between finite spaces $P,Q$ are homotopic if and only if there exists a finite sequence of natural transformations connecting $f$ and $g$.
	From this viewpoint, we may consider a homotopy relation of general functors based on natural transformations \cite{Lee73}, \cite{Hof74}. Here, we use the terminology of Minian's paper \cite{Min02}.

	\begin{definition}\label{acyclic_homotopy}
	We call two functors $F, G \co \A \to \B$ {\em homotopic} and use the notation $F \simeq G$ if there are functors $H_i \co \A \to \B$ for $i=0, \ldots,n$ with natural transformations: 
	\[
	F = H_0 \Rightarrow H_1 \Leftarrow H_2 \Rightarrow \ldots \Rightarrow H_n=G.
	\]
	A functor $F \co \A \to \B$ is a {\em strong homotopy equivalence} if there exists a functor $G \co \B \to \A$ satisfying $G \circ F \simeq 1_{\A}$ and $F \circ G \simeq 1_{\B}$. If there exists a strong homotopy equivalence between $\A$ and $\B$, we say that these categories are {\em strongly homotopy equivalent} and use the notation $\A \simeq \B$.
	\end{definition}

	Stong found homotopically reducible points of finite spaces and called them {\em beat points} (the original term was {\em linear points}). We can naturally extend this notion to the setting of categories. 
	For a category $\A$ and an object $x$ of $\A$, the over category $\A/x$ is defined as follows: $\ob(\A/x) = \{ f \in \mor(\A) \mid f \co y \to x\}$ and $(\A/x)(f,g) = \{h \in \mor(\A) \mid g \circ h=f\}$. This category has as terminal object the identity $\id_x$.
The full subcategory $\hat{\A}/x$ of the over category $\A/x$ is defined by removing the identity $\id_x$, i.e., $\ob(\hat{\A}/x) = \ob(\A/x) \bs \{id_x\}$.

	\begin{definition}\label{beat_object}
	An object $x$ of an acyclic category $\A$ is called a {\em down beat object} if $\hat{\A}/x$ has a terminal object $f \co y \to x$. Namely, any nontrivial morphism $g \co z \to x$ is uniquely factored through $f$ in $\A$, that is, there exists a unique morphism $\tilde{g} \co z \to y$ with $g=f \circ \tilde{g}$. We call $f$ the {\em down beat morphism} that is associated with $x$. 
	Dually, we can define the notions of {\em up beat objects} and {\em up beat morphisms} by reversing the directions of morphisms. That is, an up beat object (resp., morphism) is defined as a down beat object (resp., morphism) in the opposite category $\A^{\op}$. 
	We say that an object is a {\em beat object} if it is either a down beat object or an up beat object.
	\end{definition}

	\begin{remark}
	If $\A$ is a finite poset (finite space), the category $\A/x$ over an object $x$ is called the {\em prime ideal} of $x$. The notion of beat objects of $\A$ coincides with the classical notion of beat points. 
	\end{remark}

	 The down beat morphism $f \co y \to x$ associated with a down beat object $x$ of $\A$ induces a functor $F \co \A \to \A$ defined by $Fa=a$ for $a \neq x$, and $Fx=y$ on objects. For a morphism $g \co a \to b$ with $g \neq \id_x$, we define 
	\[
	Fg=
		\begin{cases}
			g & \textrm{if $a \neq x, b \neq x$},\\
			g \circ f & \textrm{if $a=x,  b \neq x$},\\ 
			\tilde{g} & \textrm{if $a \neq x, b=x$}, 
		\end{cases}
	\]
	and $F(\id_x)=\id_y$.  
	We notice that this functor has the following two properties:
	\begin{itemize}
		\item It is a projection (retraction), i.e., $F^2 = F \co \A \to \A$.
		\item It is equipped with a natural transformation $t \co F \Rightarrow 1_A$ that is given by $t_a=\id_a$ for $a \neq x$ and $t_x=f$.
	\end{itemize}
	In the homotopy theory of finite posets, such functors (order-preserving maps) are known as {\em closure operators} (see Section 13.2 of \cite{Koz08}). Let us extend the notion of closure operators to functors.

	\begin{definition}
	A functor $F \co \A \to \A$ on an acyclic category $\A$ is called a {\em descending functor} if it satisfies the following two conditions:
		\begin{enumerate}
			\item $F^{2}=F$.
			\item There exists a natural transformation $t \co F \Rightarrow 1_{\A}$.
		\end{enumerate}
	Ascending functors are defined dually.
	\end{definition}

	A down beat object determines a descending functor and, dually, an up beat object determines an ascending functor.

	\begin{proposition}\label{closure_adjoint}
	Let $F \co \A \to \A$ be a descending functor on an acyclic category $\A$.
The functor $F \co \A \to F(\A)$ onto the category of the image $F$ is right adjoint to the inclusion functor.
	\end{proposition}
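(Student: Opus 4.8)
The plan is to establish the adjunction $\iota \dashv F$ directly, where $\iota \co F(\A) \to \A$ is the inclusion and $F \co \A \to F(\A)$ is the corestriction of the descending functor to its image. First I would note that $F(\A)$ really is a (possibly non-full) subcategory of $\A$: its objects are the fixed objects $Fa$, and for composable image morphisms one has $Fg \circ Fh = F(Fg) \circ F(Fh) = F(Fg \circ Fh)$ by functoriality and $F^2 = F$, so composites of image morphisms are again image morphisms. As a subcategory of an acyclic category, $F(\A)$ is itself acyclic. One also checks immediately that $F \circ \iota = 1_{F(\A)}$, since $F(Fa) = Fa$ and $F(Fg) = Fg$.

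Next I would produce the natural bijection required of a right adjoint, namely $\A(\iota c, b) \cong F(\A)(c, Fb)$ for $c \in \ob(F(\A))$ and $b \in \ob(\A)$. Define $\Phi(g) = Fg$, which is a morphism $Fc \to Fb$ lying in the image, hence a morphism $c \to Fb$ in $F(\A)$ because $Fc = c$; and define $\Psi(h) = t_b \circ h$, where $t \co F \Rightarrow 1_{\A}$ is the natural transformation from the definition of a descending functor. The composite $\Psi\Phi(g) = t_b \circ Fg$ equals $g \circ t_c$ by naturality of $t$, while the composite $\Phi\Psi(h) = F(t_b \circ h) = F(t_b) \circ Fh = F(t_b) \circ h$ uses $Fh = h$, which is valid because $h$ already lies in the image.

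Thus both verifications collapse to the two assertions $t_c = \id_c$ and $F(t_b) = \id_{Fb}$, and this is exactly the step where the hypothesis is indispensable. The morphism $t_c \co Fc \to c$ is an endomorphism of $c$ because $Fc = c$, while $F(t_b) \co F(Fb) \to Fb$ is an endomorphism of $Fb$ because $F^2 = F$; since $\A$ is acyclic, every hom-set $\A(x,x)$ consists of the identity alone, so both morphisms are forced to be identities. Naturality of $\Phi$ in $b$ and in $c$ is then routine from functoriality of $F$ together with $F\gamma = \gamma$ for image morphisms $\gamma$. The only real obstacle is recognizing that acyclicity is precisely what trivializes these endomorphisms; without it the transformation $t$ need not supply the required bijection, and the candidate unit and counit need not satisfy the triangle identities.
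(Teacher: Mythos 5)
Your proof is correct and follows essentially the same route as the paper's: both exhibit the natural hom-set bijection $\A(\iota c, b) \cong F(\A)(c, Fb)$ via $g \mapsto Fg$, with inverse given by postcomposition $h \mapsto t_b \circ h$. The paper compresses the mutual-inverse verification into ``we can verify''; your explicit observation that acyclicity forces the endomorphisms $t_c$ and $F(t_b)$ to be identities is exactly the detail hidden behind that phrase.
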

	\begin{proof}
	For a pair of objects $x \in \ob(\A)$ and $Fy \in \ob(F(\A))$, we can verify that the map
	\[
	F \co \A(Fy,x) \to F(\A)(F^2y,Fx)=F(\A)(Fy,Fx)
	\]
	is a natural isomorphism. Indeed, an inverse map is given by composing $t_x \co Fx \to x$:
	\[
	(t_x)_* \co F(\A)(Fy,Fx) \to \A(Fy,x).
	\]
	\end{proof}

	For an acyclic category $\A$ and an object $x$ of $\A$, let $\A \bs x$ denote the full subcategory of $\A$ with the set of objects $\ob(\A) \bs \{x\}$. The next corollary follows from Proposition \ref{closure_adjoint} and the functors associated with beat morphisms.

	\begin{corollary}\label{point_reduction}
	For a beat object $x$ in an acyclic category $\A$, the category $\A$ is strongly equivalent to $\A \bs x$.
	\end{corollary}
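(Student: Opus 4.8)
The plan is to treat the down beat case in full and deduce the up beat case by duality, using that the homotopy relation of Definition \ref{acyclic_homotopy} is self-dual: reversing every natural transformation in a zigzag $F = H_0 \Rightarrow H_1 \Leftarrow \cdots \Rightarrow H_n = G$ produces a zigzag between $G^{\op}$ and $F^{\op}$, so $F \simeq G$ if and only if $F^{\op} \simeq G^{\op}$, whence $\A \simeq \A \bs x$ if and only if $\A^{\op} \simeq \A^{\op} \bs x$. Assume then that $x$ is a down beat object with down beat morphism $f \co y \to x$, and let $F \co \A \to \A$ be the descending functor displayed just before the definition of descending functors.

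First I would identify the image category $F(\A)$ with the full subcategory $\A \bs x$. Since $f$ is nontrivial and $\A$ is acyclic, $y \neq x$; thus $F$ sends every object into $\ob(\A) \bs \{x\}$ and fixes each object $a \neq x$. Reading off the case distinction defining $F$ on morphisms, every morphism between objects distinct from $x$ is fixed by $F$, while morphisms touching $x$ are sent (via $f$ or $\tilde{g}$) to morphisms between objects distinct from $x$. Hence $F(\A)$ has object set $\ob(\A) \bs \{x\}$ and contains exactly the morphisms of $\A$ between such objects, i.e. $F(\A) = \A \bs x$.

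Next I would invoke Proposition \ref{closure_adjoint}: the corestriction $F \co \A \to F(\A) = \A \bs x$ is right adjoint to the inclusion $\iota \co \A \bs x \to \A$. The two composites are then immediate: $F \circ \iota = 1_{\A \bs x}$ on the nose, since $F$ fixes every object and morphism of $\A \bs x$; and $\iota \circ F = F$ as an endofunctor of $\A$, which is homotopic to $1_{\A}$ through the natural transformation $t \co F \Rightarrow 1_{\A}$ supplied with the descending functor (equivalently, the unit and counit of the adjunction are the identity $1_{\A \bs x} \Rightarrow F \circ \iota$ and the counit $\iota \circ F \Rightarrow 1_{\A}$ realized by $t$). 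Since a single natural transformation already witnesses the homotopy relation, both $F \circ \iota \simeq 1_{\A \bs x}$ and $\iota \circ F \simeq 1_{\A}$ hold, so $\iota$ and $F$ are mutually inverse strong homotopy equivalences and $\A \simeq \A \bs x$.

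I expect the only point requiring genuine care to be the identification $F(\A) = \A \bs x$ — specifically, checking that the image is the \emph{full} subcategory on $\ob(\A) \bs \{x\}$ rather than a proper, non-full subcategory; this is exactly where the explicit morphism formula for $F$ is needed. Everything else reduces to the formal fact that an adjunction supplies natural transformations in both directions, which by Definition \ref{acyclic_homotopy} is precisely what a strong homotopy equivalence requires. The up beat case then follows dually, either by the opposite-category argument above or, equivalently, by applying the ascending-functor analogue of Proposition \ref{closure_adjoint}, under which the associated ascending functor is a left adjoint to the inclusion and the same computation applies verbatim.
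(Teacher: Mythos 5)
Your proof is correct and follows essentially the same route as the paper, which simply observes that the corollary ``follows from Proposition \ref{closure_adjoint} and the functors associated with beat morphisms'': the descending functor $F$ built from the down beat morphism has image $F(\A)=\A\bs x$, the adjunction (equivalently the natural transformation $t\co F\Rightarrow 1_{\A}$ together with $F\circ\iota=1_{\A\bs x}$) yields the two required homotopies, and the up beat case is dual. Your write-up merely makes explicit the details the paper leaves to the reader, in particular the identification of the image with the full subcategory $\A\bs x$.
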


	\begin{remark}
	The definition of beat objects in Definition \ref{beat_object} can be extended to general categories with loops of morphisms. However, if we attempt to relate the strong homotopy theory to the reduction of beat objects, loops of morphisms are difficult to treat.
	For example, let us consider a nontrivial group $G$ regarded as a category with a single object and invertible endomorphisms. For a morphism (element) $g$ of $G$, any morphism $h$ is uniquely factored through $g$ as $h=g(g^{-1}h)$. The unique object becomes a beat object. However, $G$ and $G \bs * = \emptyset$ are not strong homotopy equivalent. 
	\end{remark}

	\begin{definition}\label{acyclic_collapse}
	For an acyclic category $\A$ with a beat object $x$, we say that there is an {\em elementary strong collapse} from $\A$ to $\A \bs x$. For a full subcategory $\B$ of $\A$, if there exists a sequence of elementary strong collapses starting in $\A$ and ending in $\B$, we say that there is a {\em strong collapse} from $\A$ to $\B$ and use the notation $\A \ses \B$. In particular, when $\B$ is the trivial category with a single object and the identity, we call $\A$ {\em strongly collapsible} and use the notation $\A \ses *$. 
	For two acyclic categories $\A$ and $\B$, we say that $\A$ and $\B$ {\em have the same strong equivalence type} if there exists a sequence of acyclic categories 
\[
\A = \C_0, \C_1, \ldots, \C_n = \B
\]
such that $\C_i \ses \C_{i+1}$ or $\C_{i+1} \ses \C_i$ for each $i$.
	\end{definition}

	\begin{definition}
	An acyclic category $\A$ is called {\em minimal} if it has no beat object.
	A full subcategory $\B$ of $\A$ is called a {\em core} if it is minimal and $\A \ses \B$.
	\end{definition}

 For an acyclic category $\A$, let $P(\A)$ denote the poset $\ob(\A)$ with the partial order $x \leq y$ that is given by $\A(x,y) \ne \emptyset$. We note that $\A$ and $P(\A)$ do not have the same strong equivalence type, as we can see by the next example.

	\begin{example}\label{S^1}
	Let $\cS^1$ be an acyclic category with two objects $x,y$ and two parallel morphisms between them:
		\[
		\xymatrix{
			x \ar@/^2ex/[r]^{} \ar@/_2ex/[r]_{} & y.
		}
		\]
	The associated poset $P(\cS^1)$ is described as $x<y$. We notice that $\cS^1$ is minimal. However, $P(\cS^1)$ is strongly collapsible.
	\end{example}

	Every acyclic category has a core by successively removing all beat objects.
	We will next show that a core is determined uniquely up to isomorphism.

	\begin{proposition}\label{F=1}
	Let $\A$ be a minimal acyclic category. If a functor $F \co \A \to \A$ is homotopic to the identity functor $1_\A$, then $F=1_A$.
	\end{proposition}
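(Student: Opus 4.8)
The plan is to reduce the general homotopy to the case of a \emph{single} natural transformation attached to the identity, and then to exploit the absence of beat objects through a minimality argument in the spirit of Stong's beat-point reduction for finite spaces. Since $F \simeq 1_\A$ means there is a finite zigzag $1_\A = H_0 \Rightarrow H_1 \Leftarrow H_2 \Rightarrow \cdots \Rightarrow H_n = F$, it suffices to prove the following local statement together with its dual: if $\A$ is minimal and there is a single natural transformation $t \co G \Rightarrow 1_\A$ (resp.\ $t \co 1_\A \Rightarrow G$), then $G = 1_\A$. Granting this, I would induct along the fence starting from $H_0 = 1_\A$: once $H_i = 1_\A$, the adjacent transformation is one between $1_\A$ and $H_{i+1}$ in one of the two directions, so the local statement (or its dual) forces $H_{i+1} = 1_\A$; hence every $H_i$, and in particular $F = H_n$, equals $1_\A$. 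The dual direction costs nothing, because minimality is self-dual — an object is a beat object of $\A$ if and only if it is one of $\A^{\op}$ — so it is enough to handle $t \co G \Rightarrow 1_\A$ and apply it to $\A^{\op}$.

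For the local statement I would first show $G$ is the identity on objects. Since $\A$ is acyclic, whenever $Ga = a$ the component $t_a \co Ga \to a$ is an endomorphism, hence $t_a = \id_a$; and when $Ga \ne a$ the morphism $t_a$ forces $Ga < a$ in $P(\A)$. Arguing by contradiction, suppose the set of objects moved by $G$ is nonempty and choose $x$ minimal in it, so that $Gx < x$ while every $z < x$ is fixed, i.e.\ $Gz = z$ and $t_z = \id_z$. I claim $t_x \co Gx \to x$ is a terminal object of $\hat{\A}/x$, which makes $x$ a down beat object and contradicts minimality. Existence of a factorization of a nontrivial $g \co z \to x$ through $t_x$ is immediate from naturality: as $z < x$ is fixed, $t_x \circ Gg = g \circ t_z = g$, so $Gg \co z \to Gx$ is the required lift.

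The hard part will be the \emph{uniqueness} of this factorization, which is exactly the point at which acyclic categories diverge from posets (where uniqueness is automatic and only the universality of a single morphism is available). Here I expect the following mechanism to work. Because $Gx < x$, it too is fixed, so $G(Gx) = Gx$ and $t_{Gx} = \id_{Gx}$, and likewise every $z \le Gx$ is fixed. Given any $h \co z \to Gx$ with $t_x \circ h = g$, naturality of $t$ at $h$ gives $Gh = t_{Gx} \circ Gh = h \circ t_z = h$, so $G$ fixes $h$; applying $G$ to $t_x \circ h = g$ then yields $G(t_x) \circ h = Gg$. But $G(t_x) \co Gx \to Gx$ is an endomorphism of $Gx$, hence $\id_{Gx}$ by acyclicity, so $h = Gg$. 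Thus the lift is unique, $x$ is a down beat object, and the contradiction shows $G$ fixes every object.

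Finally, with $Ga = a$ for all objects, acyclicity forces $t_a = \id_a$ for every $a$, and naturality at an arbitrary morphism $g \co a \to b$ gives $Gg = t_b \circ Gg = g \circ t_a = g$. Hence $G = 1_\A$, which establishes the local statement and, through the fence induction above, the proposition.
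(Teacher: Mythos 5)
Your proof is correct and takes essentially the same approach as the paper: reduce to a single natural transformation $t \co G \Rightarrow 1_{\A}$, and show by a minimality/induction argument over $P(\A)$ that any object with $Gx \neq x$ would be a down beat object with down beat morphism $t_x \co Gx \to x$, contradicting minimality of $\A$. You in fact spell out two steps the paper compresses --- the ``without loss of generality'' reduction to a single transformation (via your fence induction and the self-duality of minimality) and the identities $G(t_x) = \id_{Gx}$ and $Gh = h$ needed in the uniqueness part of the factorization.
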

	\begin{proof}
	Without loss of generality, we may assume that there is a natural transformation $t \co F \Rightarrow 1_{\A}$.
	It suffices to show that $F$ is the identity on objects because of the acyclicity of $\A$.
	For a minimal object $x$ of the associated poset $P(\A)$, the object $Fx$ must be $x$ by the minimality of $x$. 
	We assume that $Fy=y$ for any object $y<x$ in $P(\A)$ and we will show that $Fx=x$. In the case $Fx \neq x$, we will prove that $x$ is a down beat object with the down beat morphism $t_x \co Fx \to x$. 
	For any morphism $g \co y \to x$, the morphism $Fg \co y=Fy \to Fx$ satisfies $t_x \circ Fg = g \circ t_y = g$. 
	It remains to show the universality of $Fg$. Let us consider a morphism $h \co y \to Fx$ with $t_x \circ h=g$.
	We have 
	\[
		Fg = F(t_x \circ h) = F(t_x) \circ Fh = \id_{Fx} \circ h = h.
	\]
	This contradicts the minimality of $\A$. Hence, $Fx=x$ for any object $x$ of $\A$. It implies that $t_x=\id_x$ for any object $x$ and $Ff=f$ for any morphism $f$ of $\A$. Hence, $F=1_{\A}$.
	\end{proof}

	\begin{corollary}\label{unique_core}
	For an acyclic category $\A$, a core is uniquely determined up to isomorphism.
	\end{corollary}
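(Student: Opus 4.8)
The plan is to reduce the uniqueness statement entirely to Proposition \ref{F=1}, which already encodes the rigidity of minimal categories. First I would note that an elementary strong collapse is a strong homotopy equivalence by Corollary \ref{point_reduction}, so any strong collapse $\A \ses \B$ yields a strong homotopy equivalence $\A \simeq \B$. Consequently, if $\B_1$ and $\B_2$ are two cores of $\A$, then each is strongly homotopy equivalent to $\A$, and hence to each other, giving $\B_1 \simeq \B_2$.

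It therefore suffices to prove that two minimal acyclic categories that are strongly homotopy equivalent must be isomorphic. So I would take $\B_1$ and $\B_2$ to be minimal and choose functors $F \co \B_1 \to \B_2$ and $G \co \B_2 \to \B_1$ realizing the equivalence, so that $G \circ F \simeq 1_{\B_1}$ and $F \circ G \simeq 1_{\B_2}$.

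The key step is to apply Proposition \ref{F=1} to each composite separately. Since $\B_1$ is minimal and $G \circ F \co \B_1 \to \B_1$ is homotopic to $1_{\B_1}$, the proposition forces $G \circ F = 1_{\B_1}$; by the symmetric argument on $\B_2$, we obtain $F \circ G = 1_{\B_2}$. Hence $F$ and $G$ are mutually inverse functors, and $\B_1 \cong \B_2$. Applying this to the two cores of $\A$ yields the desired isomorphism.

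I do not expect a serious obstacle, since the substantive content is carried by Proposition \ref{F=1}; the only point requiring care is to verify that the two composites are genuinely endofunctors of a single minimal category ($\B_1$ and $\B_2$ respectively), so that the proposition applies directly to each of them rather than to some mixed functor between distinct categories.
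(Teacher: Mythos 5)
Your proof is correct and takes essentially the same route as the paper: the paper also passes from the collapses $\A \ses \A_1$, $\A \ses \A_2$ to a strong homotopy equivalence between the cores (written explicitly there as $R_2 \circ J_1$ with homotopy inverse $R_1 \circ J_2$, where $R_i$ and $J_i$ are the retraction and inclusion), and then applies Proposition \ref{F=1} to both composites to force them to be identities, so that the equivalence is an isomorphism. Your only variation is to leave the equivalence abstract rather than naming it via the retractions coming from Corollary \ref{point_reduction}, which does not change the argument.
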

	\begin{proof}
	Let $\A_1$ and $\A_2$ be cores of $\A$. We have a strong collapse $\A \ses \A_{i}$ for each $i=1,2$. By Corollary \ref{point_reduction}, we have the deformation retraction $R_i \co \A \to \A_i$ with a sequence of natural transformations between $J_i \circ R_i$ and $1_{\A}$, where $J_i \co \A_i \to \A$ is the inclusion functor.
	The composition $R_2 \circ J_1 \co \A_1 \to \A_2$ has a homotopy inverse $R_1 \circ J_2$. Proposition \ref{F=1} implies that $\A_1$ and $\A_2$ are isomorphic due to their minimality.
	\end{proof}

	We show that two acyclic categories are strong homotopy equivalent (Definition \ref{acyclic_homotopy}) if and only if they have the same strong equivalence type (Definition \ref{acyclic_collapse}).

	\begin{lemma}\label{iso_acyclic}
	Isomorphic acyclic categories have the same strong equivalence type.
	\end{lemma}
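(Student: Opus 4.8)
The plan is to connect $\A$ and $\B$ through a single auxiliary acyclic category that strong collapses onto each of them, so that the required zigzag of strong collapses has length two. Fix an isomorphism $\phi \co \A \to \B$ and form the mapping cylinder $\C$: set $\ob(\C) = \ob(\A) \sqcup \ob(\B)$, let the full subcategories of $\C$ spanned by $\ob(\A)$ and by $\ob(\B)$ be $\A$ and $\B$, put $\C(b,a) = \emptyset$ for $a \in \ob(\A)$, $b \in \ob(\B)$, and set $\C(a,b) = \B(\phi a, b)$ with composition induced by $\phi$ and by composition in $\B$. (Concretely $\C \cong \A \times [1]$ for the arrow category $[1] = \{0 \to 1\}$.) First I would verify that $\C$ is acyclic: each endomorphism set is a singleton because those of $\A$ and $\B$ are, and no pair of distinct objects admits morphisms in both directions, since cross-morphisms run only from $\ob(\A)$ to $\ob(\B)$ while $\A$ and $\B$ are themselves acyclic.

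The core of the argument is to exhibit the two strong collapses $\C \ses \A$ and $\C \ses \B$. For $\C \ses \A$ I would delete the objects of $\ob(\B)$ one at a time, in an order refining the poset $P(\B)$ with minimal objects first. When such an object $b$ is deleted, every object $b' \neq b$ with $\B(b',b) \neq \emptyset$ has already been removed, so the only remaining nontrivial morphisms into $b$ come from $\ob(\A)$. I claim that $b$ is then a down beat object whose down beat morphism is the cross-morphism $f \co \phi^{-1}b \to b$ corresponding to $\id_b \in \B(b,b)$: any remaining nontrivial $g \co a \to b$ corresponds to an element of $\B(\phi a, b)$, and $g = f \circ \tilde{g}$ holds for the unique $\tilde{g} \co a \to \phi^{-1}b$ with $\phi(\tilde{g}) = g$, the uniqueness coming precisely from $\phi$ being an isomorphism. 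Deleting all of $\ob(\B)$ in this way yields $\C \ses \A$. Dually, deleting the objects of $\ob(\A)$ in an order refining $P(\A)$ with maximal objects first turns each into an up beat object via the same family of cross-morphisms, giving $\C \ses \B$. Taking the sequence $\A, \C, \B$ with $\C \ses \A$ and $\C \ses \B$ then shows, by Definition \ref{acyclic_collapse}, that $\A$ and $\B$ have the same strong equivalence type.

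The step I expect to be the main obstacle is pinning down the deletion order and confirming the beat-object property. The crucial point is that a within-$\B$ morphism $b' \to b$ can never factor through $f \co \phi^{-1}b \to b$, because that would require a morphism $b' \to \phi^{-1}b$ from $\ob(\B)$ into $\ob(\A)$, of which there are none; this forces the minimal-first order and is exactly what guarantees $f$ is terminal in $\hat{\C}/b$. Verifying the dual statement for up beat objects, and checking that the uniqueness of factorizations genuinely uses the invertibility of $\phi$ (mirroring the role of universality stressed throughout the paper), are the remaining points to nail down.
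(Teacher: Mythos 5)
Your proof is correct, but it takes a genuinely different route from the paper's. The paper interpolates between $\A$ and $\B$ through a chain of hybrid categories $\A_0=\A,\A_1,\ldots,\A_m=\B$, each obtained from the previous one by swapping a single object $a_i$ for $Fa_i$; each adjacent pair is then linked by a small doubled category $\C_i$ containing both $a_i$ and $Fa_i$ joined by one morphism $k$, with $\C_i \ses \A_i$ and $\C_i \ses \A_{i-1}$ each being a single elementary strong collapse. This produces a zigzag of length $2m$. You instead build one global mapping cylinder $\C \cong \A \times [1]$ containing $\A$ and $\B$ as full subcategories, and collapse it onto each end: onto $\A$ by deleting the objects of $\B$ in a linear extension of $P(\B)$ with minimal objects first, and onto $\B$ dually, giving the shortest possible zigzag $\A \swarrow \C \searrow \B$ of length $2$. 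The two arguments hinge on the same key point, which you correctly isolate: the cross-morphisms induced by the isomorphism are beat morphisms precisely because invertibility of $\phi$ yields unique factorizations, and morphisms within $\B$ (resp.\ $\A$) can never factor through a cross-morphism, which forces the deletion order. Both proofs also need a linear extension of the acyclic order --- the paper to define its hybrid categories so that cross-morphisms run only one way, you to schedule the deletions --- so in effect the paper's proof is your cylinder collapsed one slice at a time. Your version is more economical in auxiliary data (one category rather than $2m$) and makes the conceptual content more visible; the paper's version keeps every verification down to a single elementary collapse on a category differing from a genuine stage by one object. Your argument as stated is complete once the routine checks you flag (acyclicity of $\C$, associativity of the cylinder composition, and the dual up-beat verification) are written out, and all of them go through exactly as you predict.
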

	\begin{proof}
	Let $F \co \A \to \B$ be an isomorphism between acyclic categories. We choose a total order (linear extension) on $\ob(A)=\{a_1,\ldots,a_m\}$ satisfying $\A(a_j,a_k) = \emptyset$ whenever $k<j$. We define acyclic categories $\A_i$ as follows: 
the set object is 
	\[
		\ob(\A_i)=\{Fa_1, \ldots, Fa_i, a_{i+1}, \ldots,a_m\}
	\]
and the set of morphisms is 
	\[
		\A_i(x,y) = 
		\begin{cases}
			\A(x,y) & \textrm{if $x,y \in \ob(\A)$}, \\
			\B(x,y) & \textrm{if $x,y \in \ob(\B)$}, \\
			\{f' \co Fa_k \to a_{j} \mid f \co a_k \to a_j \} & \textrm{if $x=Fa_k, y=a_j, k \leq i < j$},\\
			\emptyset & \textrm{otherwise}.
		\end{cases}
	\]
	The composition can be naturally defined by $\A$, $\B$, and the isomorphism $F$. Moreover, we consider another acyclic category $\C_i$ as follows: the set of objects is 
	\[
		\ob(\C_i)=\{Fa_1, \ldots, Fa_i, a_i, a_{i+1}, \ldots,a_m\}
	\]
and the set of morphisms is 
	\[
		\C_i(x,y) = 
		\begin{cases}
			\A_i(x,y) & \textrm{if } x,y \in \ob(\A_i), \\
			\A_{i-1}(x,y) & \textrm{if } x,y \in \ob(\A_{i-1}), \\
			\{k\} & \textrm{if } x=Fa_i,y=a_{i},\\
			\emptyset & \textrm{otherwise}.
		\end{cases}
	\]
	The composition is given by $f \circ k = f'$ in $\A_i$ and $k \circ g=g'$ in $\A_{i-1}$.
	We notice that $a_i$ and $Fa_i$ are beat objects in $\C_i$, which associate $k$ as the beat morphism. Thus, $C_i \ses \A_i$ and $\C_i \ses \A_{i-1}$. Therefore, $A_{i-1}$ and $A_{i}$ have the same strong equivalence type for each $i$. Moreover, $\A=\A_0$ and $\B=\A_m$ have the same homotopy type as well.
	\end{proof}

	\begin{corollary}\label{acyclic_simeq_strong}
	Two acyclic categories $\A$ and $\B$ have the same strong equivalence type if and only if $\A \simeq \B$.
	\end{corollary}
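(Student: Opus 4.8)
The plan is to prove both implications, with cores serving as the bridge between the two notions. First I would record that the relation ``having the same strong equivalence type'' is an equivalence relation: by Definition \ref{acyclic_collapse} it is visibly reflexive and transitive (concatenate the defining sequences), and it is symmetric because that definition already allows each step in either direction. The relation $\simeq$ is an equivalence relation for the analogous formal reasons.

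For the ``only if'' direction, suppose $\A$ and $\B$ have the same strong equivalence type. By transitivity of $\simeq$ it suffices to treat a single elementary strong collapse $\C \ses \C \bs x$ at a beat object $x$; here Corollary \ref{point_reduction} gives $\C \simeq \C \bs x$ directly. Chaining these equivalences along the defining sequence of collapses then yields $\A \simeq \B$.

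For the ``if'' direction, suppose $\A \simeq \B$. I would choose a core $\A_c$ of $\A$ and a core $\B_c$ of $\B$, obtained by successively removing beat objects, so that $\A \ses \A_c$ and $\B \ses \B_c$. In particular $\A$ and $\A_c$, and $\B$ and $\B_c$, have the same strong equivalence type, so the ``only if'' direction gives $\A \simeq \A_c$ and $\B \simeq \B_c$. Transitivity of $\simeq$ then produces
\[
\A_c \simeq \A \simeq \B \simeq \B_c,
\]
whence $\A_c \simeq \B_c$.

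The crux is to upgrade this strong homotopy equivalence between the \emph{minimal} categories $\A_c$ and $\B_c$ to an isomorphism. I would pick witnessing functors $F \co \A_c \to \B_c$ and $G \co \B_c \to \A_c$ with $G \circ F \simeq 1_{\A_c}$ and $F \circ G \simeq 1_{\B_c}$. Since $\A_c$ is minimal, Proposition \ref{F=1} forces $G \circ F = 1_{\A_c}$, and minimality of $\B_c$ likewise forces $F \circ G = 1_{\B_c}$; hence $F$ is an isomorphism and $\A_c \cong \B_c$. By Lemma \ref{iso_acyclic} the isomorphic cores have the same strong equivalence type, and combining this with $\A \ses \A_c$, $\B \ses \B_c$, and transitivity of the relation, $\A$ and $\B$ have the same strong equivalence type. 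I expect this upgrade step to be the only real obstacle, and it is exactly the content of Proposition \ref{F=1}; everything else is formal bookkeeping with the two equivalence relations, essentially mirroring the proof of Corollary \ref{unique_core}.
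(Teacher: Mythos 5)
Your proposal is correct and follows essentially the same route as the paper: the forward direction chains the strong homotopy equivalences from Corollary \ref{point_reduction} along the collapse sequence, and the converse passes to cores, upgrades their strong homotopy equivalence to an isomorphism via Proposition \ref{F=1}, and then invokes Lemma \ref{iso_acyclic} to conclude. The only difference is that you spell out explicitly the bookkeeping (equivalence-relation properties and the $G\circ F = 1$, $F\circ G = 1$ step) that the paper leaves implicit.
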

	\begin{proof}
	If $\A$ and $\B$ have the same strong equivalence type, there is a sequence of elementary strong collapses starting in $\A$ and ending in $\B$. An elementary strong collapse induces a strong homotopy equivalence. Therefore, $\A \simeq \B$. 
	Conversely, we assume that $\A \simeq \B$. The cores $\A_0$ and $\B_0$ are strongly homotopy equivalent as well. Proposition \ref{F=1} implies that $\A_0 \cong \B_0$ and Lemma \ref{iso_acyclic} implies that these have the same strong equivalence type. 
	Since the original category and its core have the same strong equivalence type, $\A$ and $\B$ have the same strong equivalence type.
	\end{proof}


\section{Strong homotopy type of $\Delta$-complexes}

	In the previous section, we discussed the strong homotopy theory of acyclic categories as a generalization of posets (finite spaces). A poset can be associated with a geometric object that is called the {\em order complex}. This is a simplicial complex consisting of ordered sequences of elements of the poset. 

	For a general category, the {\em classifying space} (see Section 4) is known as a generalization of order complexes. The classifying space of an acyclic category is a space with triangulation. However, it is not a simplicial complex in general. 
	We formulate such triangulated spaces in purely combinatorial terms as generalizations of simplicial complexes. A similar definition in terms of regular cell complexes can be found in Appendix of Hatcher's book \cite{Hat02}.

	For a set $S$, let $2^{S}$ denote the power set of $S$. Moreover, let $2^{S}_{n}$ denote the subset of $2^{S}$ that consists of all subsets with cardinality $n+1$.

	\begin{definition}\label{Delta_complex}
	A {\em $\Delta$-complex} $X$ consists of the following data:
	\begin{itemize}
		\item A family of sets $\{X_{n}\}_{n \geq 0}$ indexed by nonnegative integers.
		\item A map $V \co X_n \to 2^{X_0}_n$ for each $n$.
		\item For each $v \in X_0$ and $n \geq 0$, a map $d_v \co X_n(v) \to X_{n-1}$, where $X_n(v)=V^{-1}(2^{X_0}_{n}(v))$ and $2^{X_0}_n(v) = \{A \in 2^{X_0}_n \mid v \in A\}$.
	\end{itemize}
	We call an element $\sigma$ in $X_n$ an {\em $n$-simplex} of $X$ and write $\dim \sigma =n$ for its dimension. In particular, we call an element in $X_0$ (resp., $X_1$) a {\em vertex} (resp., an {\em edge}) of $X$.
	These data are required to satisfy the following two conditions:
		\begin{enumerate}
			\item For each vertex $v \in X_0$ and $n \geq 1$, the following diagram is commutative:
				\[
				\xymatrix{
				X_{n}(v) \ar[r]^-{V} \ar[d]_{d_v}& 2^{X_0}_n(v) \ar[d]^{\hat{v}} \\
				X_{n-1} \ar[r]_{V} & 2^{X_0}_{n-1},
				}
				\]
		where $\hat{v} \co 2^{X_0}_n(v) \to  2^{X_0}_{n-1}$ deletes the vertex $v$.
			\item For two vertices $v,w \in X_0$ and $n \geq 2$, the following diagram is commutative:
				\[
				\xymatrix{
				X_{n}(v) \cap X_{n}(w) \ar[r]^-{d_v} \ar[d]_{d_w}& X_{n-1}(w) \ar[d]^{d_w} \\
				X_{n-1}(v) \ar[r]_{d_v} & X_{n-2}.
				}
				\]
		\end{enumerate}
	\end{definition}

	\begin{definition}
	For a $\Delta$-complex $X$, let $\chi(X)$ denote the set of all simplices of $X$.
	The face relation with co-dimension $1$ on $\chi(X)$ is denoted by $\sigma \prec_1 \tau$ and defined by $d_v(\tau)=\sigma$ for some $v \in V(\tau)$. The partial order generated from $\prec_1$ is denoted by $\preceq$ and $\chi(X)$ with $\preceq$ is called the {\em face poset} of $X$.
	\end{definition}

	For a vertex $v$ and $n$-simplex $\sigma$ of a $\Delta$-complex $X$, $v \preceq \sigma$ if and only if $v \in V(\sigma)$ by condition (1) in Definition \ref{Delta_complex}. We note that, unlike a simplicial complex, a simplex of a $\Delta$-complex is not uniquely determined by its vertices. That is, even if $V(\sigma)=V(\sigma')$, it may happen that $\sigma \neq \sigma'$. However, if such simplices have a common coface ($\tau$ with $\sigma,\sigma' \prec \tau$), then $\sigma=\sigma'$.

	\begin{example}
	A simplicial complex can be characterized as a $\Delta$-complex with injective map of vertices $V \co X_n \to 2_{n}^{X_0}$ for any $n \geq 0$.
	\end{example}

	Let $X$ be a $\Delta$-complex. A {\em subcomplex} $Y$ of $X$ is a $\Delta$-complex such that $Y_n \subset X_n$ and $V \co Y_n \to 2^{Y_0}_n$ and $d_v \co Y_n(v) \to Y_{n-1}$ are the restrictions of the maps that are associated with $X$ for each $n \geq 0$ and $v \in Y_0$.
	For a vertex $v$ of a $\Delta$-complex $X$, let $X \bs v$ be the subcomplex that is obtained by deleting the simplices $\sigma$ such that $v \preceq \sigma$. 

	\begin{definition}\label{Delta_map}
	For two $\Delta$-complexes $X$ and $Y$, a {\em $\Delta$-map} $(k,f) \co X \to Y$ consists of the following data: For each $\sigma \in X_{n}$, we have a pair $(k(\sigma),f(\sigma))$, where
		\begin{itemize}
			\item $k(\sigma) \leq n$ is a nonnegative integer,
			\item $f(\sigma) \in Y_{k(\sigma)}$ is a simplex of $Y$ with dimension $k(\sigma)$.
		\end{itemize}
		These data are required to satisfy the following conditions for any $\sigma \in X_n$:
		\begin{enumerate}
			\item $\tilde{f} (V(\sigma)) = V(f(\sigma))$ in $2^{Y_0}$, 
where the map $\tilde{f} \co 2^{X_0} \to 2^{Y_0}$ is induced from $f \co X_0 \to Y_0$. We note that $k(v)=0$ for any vertex $v \in X_0$.
			\item $d_{f(w)}(f(\sigma)) = f(d_w(\sigma))$ in $Y_{k(\sigma)}$ for each $w \in V(\sigma)$. We note that if $w \in V(\sigma)$, then $f(w) \in V(f(\sigma))=\tilde{f}(V(\sigma))$ by condition (1).
		\end{enumerate}
	\end{definition}

	The above definition is inspired by trisp-maps, which were introduced in Definition 2.48 of \cite{Koz08}. We write $f$ instead of $(k,f)$ when there is no confusion about $k$.
	We can easily verify that the composition of $\Delta$-maps is a $\Delta$-map and the identity $1_{X} \co X \to X$ is obviously a $\Delta$-map. Thus, $\Delta$-complexes and $\Delta$-maps constitute a category. A simplicial map between simplicial complexes is a $\Delta$-map. 
	For two $\Delta$-maps, we define a relation between them as a simple generalization of {\em contiguity} on simplicial maps \cite{Spa66}.

	\begin{definition}\label{contiguity}
	Let $(k,f)$ and $(\ell,g)$ be two $\Delta$-maps from $X$ to $Y$.
	We call these maps {\em contiguous} and use the notation $(k,f) \sim_c (\ell,g)$ if for any simplex $\sigma$ of $X$, there exists a unique simplex $\tau$ of $Y$ such that $V(\tau)=V(f(\sigma)) \cup V(g(\sigma))$ and $f(\sigma), g(\sigma) \preceq \tau$. The equivalence relation generated from $\sim_c$ is denoted by $\sim$.
When $(k,f) \sim (\ell,g)$, we say that these maps lie in the same contiguous class.

	A $\Delta$-map $(k,f) \co X \to Y$ is called {\em strong equivalence} if there exists $(\ell,g) \co Y \to X$ such that $(\ell,g) \circ (k,f) \sim 1_{X}$ and $(k,f) \circ (\ell,g) \sim 1_{Y}$. In this case, we say that $X$ and $Y$ are {\em strongly equivalent} and use the notation $X \sim Y$.
	\end{definition}

	\begin{definition}\label{dominated}
	Let $X$ be a $\Delta$-complex. A vertex $v \in X_0$ is {\em dominated} by another vertex $v'$ if for any simplex $\sigma \in X$ with $v \preceq \sigma$, there exists a unique simplex $\tau \in X$ with $V(\tau)=V(\sigma) \cup \{v'\}$ and $\sigma \preceq \tau$. By definition, there exists a unique edge $e$ spanning $v$ and $v'$. We call this the {\em dominated edge} associated with the dominated vertex $v$.

	When $v$ is a dominated vertex of $X$, we say that there is an {\em elementary strong collapse} from $X$ to $X \bs v$. For a subcomplex $Y$ of $X$, if there exists a sequence of elementary strong collapses starting in $X$ and ending in $Y$, we say that there is a {\em strong collapse} from $X$ to $Y$ and use the notation $X \ses Y$. In particular, when $Y$ consists of a single vertex, we call $X$ {\em strongly collapsible} and use the notation $X \ses *$. 
	For two $\Delta$-complexes $X$ and $Y$, we say that $X$ and $Y$ {\em have the same strong homotopy type} if there exists a sequence of $\Delta$-complexes 
\[
X = Z_0, Z_1, \ldots, Z_n = Y
\]
such that $Z_i \ses Z_{i+1}$ or $Z_{i+1} \ses Z_i$.
	\end{definition}

	Let $v$ be dominated by $v'$ in a $\Delta$-complex in $X$. For a simplex $\sigma$ including both $v$ and $v'$, we can find a simplex satisfying the above universality, that is, $\sigma$ itself. Hence, in order to prove that $v$ is dominated by $v'$, it is enough to consider simplices $\sigma$ that include $v$ and not $v'$.

	\begin{remark}\label{maximal_delta_simplicial}
	A vertex of a simplicial complex is dominated if the link forms a simplicial cone. In other words, a vertex $v$ is dominated by $v'$ if and only if every maximal simplex that contains $v$ contains $v'$ as well.
	However, in our setting of $\Delta$-complexes, this definition does not correspond to Definition \ref{dominated}. On direction is true. Namely, for a vertex $v$ that is dominated by $v'$, every maximal simplex that contains $v$ contains $v'$ as well. However, the converse is not true.
	For example, we may consider a circle $S^1$ with two vertices $v$, $v'$ and two edges. Every maximal simplex (edge) contains both $v$ and $v'$. However, neither $v$ nor $v'$ is dominated in the sense of Definition \ref{dominated}.
	\end{remark}

	A dominated vertex $v$ of a $\Delta$-complex determines a $\Delta$-map $r_v \co X \to X \bs v$ as follows: 
	\[
	r_v(\sigma)=
		\begin{cases}
			\sigma & \textrm{if $v \not \preceq \sigma$}, \\ \notag
			d_v \tau & \textrm{if $v \preceq \sigma$},
		\end{cases}
	\]
	where $\tau$ is the unique simplex with $V(\tau)=V(\sigma) \cup \{v'\}$ and $\sigma \preceq \tau$. We can easily verify that $r_v$ satisfies the conditions of $\Delta$-maps in Definition \ref{Delta_map}.

	\begin{proposition}\label{deformation_delta}
	The composition of the $\Delta$-map $r_v \co X \to X \bs v$ associated with a dominated vertex $v$ of a $\Delta$-complex $X$ and the inclusion $X \bs v \hookrightarrow X$ is contiguous to the identity map on $X$.
	\end{proposition}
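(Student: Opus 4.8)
The plan is to unwind the definition of contiguity (Definition \ref{contiguity}) directly. Write $g \co X \to X$ for the composite of $r_v$ with the inclusion $X \bs v \hookrightarrow X$. Since the identity sends each $\sigma$ to itself, checking $g \sim_c 1_X$ amounts to producing, for every simplex $\sigma$ of $X$, a \emph{unique} simplex $\tau$ of $X$ with $V(\tau) = V(\sigma) \cup V(g(\sigma))$ and $\sigma, g(\sigma) \preceq \tau$. I would split into two cases according to whether $v \in V(\sigma)$, mirroring the two branches in the definition of $r_v$.

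The case $v \not\preceq \sigma$ is immediate: here $g(\sigma) = \sigma$, so $V(\sigma) \cup V(g(\sigma)) = V(\sigma)$ and one takes $\tau = \sigma$. For uniqueness, note that any simplex with vertex set $V(\sigma)$ has the same dimension as $\sigma$ by the cardinality bookkeeping built into the map $V \co X_n \to 2^{X_0}_n$, and a coface of $\sigma$ of equal dimension must equal $\sigma$ because each covering relation $\prec_1$ drops dimension by one. In the case $v \preceq \sigma$, let $\tau_0$ be the simplex furnished by the domination hypothesis, so that $V(\tau_0) = V(\sigma) \cup \{v'\}$, $\sigma \preceq \tau_0$, and $g(\sigma) = d_v \tau_0$. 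Using condition (1) of Definition \ref{Delta_complex} (the square with $\hat{v}$) I would compute $V(g(\sigma)) = V(\tau_0) \bs \{v\} = (V(\sigma) \cup \{v'\}) \bs \{v\}$, whence
\[
V(\sigma) \cup V(g(\sigma)) = V(\sigma) \cup \{v'\} = V(\tau_0).
\]
Thus $\tau = \tau_0$ is the required simplex: $\sigma \preceq \tau_0$ holds by construction, and $g(\sigma) = d_v \tau_0 \prec_1 \tau_0$ gives $g(\sigma) \preceq \tau_0$.

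What remains is uniqueness in this second case, and this is where the hypothesis does the real work: any simplex $\rho$ with $V(\rho) = V(\sigma) \cup \{v'\}$ and $\sigma \preceq \rho$ must coincide with $\tau_0$ by the uniqueness clause in Definition \ref{dominated}. Since the contiguity condition forces exactly $V(\rho) = V(\sigma) \cup \{v'\}$ and $\sigma \preceq \rho$, this is handed to us for free. I therefore do not expect a genuine obstacle; the only nontrivial ingredient is precisely the uniqueness built into the notion of a dominated vertex, and everything else — verifying the vertex-set identity via condition (1) and the dimension argument for the easy case — is routine bookkeeping.
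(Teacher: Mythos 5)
Your proof is correct and follows essentially the same route as the paper's: split on whether $v \preceq \sigma$, take $\tau = \sigma$ in the trivial case, and take the simplex furnished by the domination hypothesis in the other, with uniqueness inherited directly from Definition \ref{dominated}. The only difference is that you spell out the routine details (the dimension argument for uniqueness when $g(\sigma)=\sigma$, and the vertex-set computation via condition (1)) that the paper leaves implicit.
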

	\begin{proof}
	We assume that $v$ is dominated by $v'$.
	For any simplex $\sigma \in X_{n}$, we have $r_v(\sigma)=\sigma$ when $v \preceq \sigma$, otherwise we have a unique simplex $\tau$ such that $V(\tau)=V(\sigma) \cup \{v'\} = V(\sigma) \cup V(r_v(\sigma))$ and contains $\sigma$ and $r_v(\sigma)=d_v(\tau)$ as faces. Hence, the composition of $r_v$ and the inclusion is contiguous to the identity.
	\end{proof}

	A $\Delta$-complex provides the gluing data of simplices. We can construct a topological space according to these data.

	\begin{definition}
	Let $X$ be a $\Delta$-complex. 
	We choose a vector $x_v$ in $\mathbb{R}^{N}$ for each $v \in X_0$ and sufficiently large $N>0$ such that the points $\{x_v\}_{v \in X_0}$ lie in general position. Let $\Delta(\sigma)$ denote the convex hull of the vertices $\{x_v\}_{v \in V(\sigma)}$ for a simplex $\sigma$ of $X$. The face of $\Delta(\sigma)$ not including a vertex $w \in V(\sigma)$ is defined as the subspace
	\[
		d_w(\Delta(\sigma)) = \left\{ \sum_{v  \in V(\sigma)} t_v x_v \in \Delta(\sigma) \mid t_w=0\right\}.
	\]
	The (geometric) realization of $X$ is denoted by $|X|$ and defined by
	\[
		|X| = \left(\coprod_{\sigma \in X} \Delta(\sigma) \right) \mathord{/} d_w(\Delta(\sigma)) \mathord{\sim} \Delta(d_w(\sigma)).
	\]
	\end{definition}

	The realization of a $\Delta$-complex $X$ is equipped with the natural characteristic map 
	\[
		\varphi_{\sigma} \co \Delta^{n} \to \Delta(\sigma) \hookrightarrow |X|
	\]
	for each $n$-simplex $\sigma$ of $X$. Every characteristic map is a homeomorphism onto its image. Hence, $|X|$ is a regular cell complex. 
	This topological formulation of $\Delta$-complexes can be found in Appendix of Hatcher's book \cite{Hat02}.

	For a regular cell complex, the simple homotopy theory based on simple collapses is a fundamental tool in combinatorial algebraic topology.  A cell $\sigma$ of a regular cell complex $X$ is a {\em free face} if it has a unique coface $\tau$ with co-dimension $1$.
We note that we have a deformation retraction $X \to X \bs \{\sigma \cup \tau\}$ in this case. We denote it by $X \searrow X \bs \{\sigma \cup \tau\}$ and call it an {\em elementary simple collapse}. 
A sequence of elementary simple collapses is called a {\em simple collapse}. Compared with strong collapses based on removing vertices, simple collapses are based on removing pairs of cells.

	\begin{lemma}
	If $X \ses Y$, then $|X| \searrow |Y|$.
	\end{lemma}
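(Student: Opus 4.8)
The plan is to reduce to a single elementary strong collapse and then realize that collapse by an explicit sequence of elementary simple collapses ordered by decreasing dimension. Since a strong collapse $X \ses Y$ is by definition a finite sequence of elementary strong collapses, and since simple collapses concatenate, it suffices to prove that for a vertex $v$ dominated by $v'$ one has $|X| \searrow |X \bs v|$. The simplices of $X$ that disappear in $X \bs v$ are exactly those $\sigma$ with $v \preceq \sigma$, i.e. $v \in V(\sigma)$; I would split them into the set $S$ of simplices containing $v$ but not $v'$ and the set $T$ of simplices containing both $v$ and $v'$.

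First I would set up a pairing between $S$ and $T$. For $\sigma \in S$ the domination of $v$ by $v'$ supplies a unique simplex $\tau(\sigma)$ with $V(\tau(\sigma)) = V(\sigma) \cup \{v'\}$ and $\sigma \preceq \tau(\sigma)$; since $v' \notin V(\sigma)$ this $\tau(\sigma)$ lies in $T$, has dimension $\dim\sigma + 1$, and satisfies $\sigma = d_{v'}\tau(\sigma)$, so $\sigma \prec_1 \tau(\sigma)$. Conversely $\tau \mapsto d_{v'}\tau$ sends $T$ into $S$, and the uniqueness clause in the definition of domination shows these two assignments are mutually inverse bijections. Thus the removed cells $S \sqcup T$ are partitioned into matched pairs $(\sigma, \tau(\sigma))$ with $\sigma \prec_1 \tau(\sigma)$ and $\dim\tau(\sigma) = \dim\sigma + 1$.

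Next I would perform the collapses, removing the pairs $(\sigma,\tau(\sigma))$ in order of strictly decreasing $\dim\tau(\sigma)$ (pairs of equal dimension in any order). The key point to verify is that, at the moment we reach a pair with $\dim\tau(\sigma)=m$, the face $\sigma$ has become a free face, i.e. $\tau(\sigma)$ is the unique cell of the current complex properly containing $\sigma$. Any codimension-one coface $\rho$ of $\sigma$ has $V(\rho)=V(\sigma)\cup\{w\}$ for a single added vertex $w$ and necessarily contains $v$. If $w=v'$ then $\rho=\tau(\sigma)$ by the uniqueness in the domination condition; if $w\neq v'$ then $\rho$ contains $v$ but not $v'$, so $\rho\in S$ with $\dim\rho=m$, whence its partner $\tau(\rho)\in T$ has dimension $m+1$ and the pair $(\rho,\tau(\rho))$ was already removed at an earlier stage, so $\rho$ is no longer present. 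The same dimension count shows $\tau(\sigma)$ is maximal in the current complex (a proper coface would lie in $T$ in dimension $m+1$ and has been removed), so $\sigma$ is a genuine free face and $|{\cdot}| \searrow |{\cdot}| \bs \{\sigma \cup \tau(\sigma)\}$ is a legitimate elementary simple collapse. Iterating over all pairs removes precisely the cells of $S\sqcup T$ and leaves $|X \bs v|$, yielding $|X| \searrow |X \bs v|$ and hence $|X| \searrow |Y|$.

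I expect the main obstacle to be exactly the phenomenon that distinguishes $\Delta$-complexes from simplicial complexes: a simplex is not determined by its vertices, so a codimension-one coface adding the vertex $v'$ need not be unique a priori, and there may be several cofaces adding some other vertex $w$. The decreasing-dimension ordering is what neutralizes the second difficulty, since all such cofaces are higher pieces of $S$ that have been removed earlier, while the domination hypothesis is precisely the input that forces uniqueness in the $v'$-direction and makes each $\sigma$ free. Keeping careful track of the maximality of $\tau(\sigma)$ in the partially collapsed complex is the delicate bookkeeping step on which the argument hinges.
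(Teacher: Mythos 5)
Your proposal is correct, and its combinatorial heart is the same as the paper's: every simplex $\sigma$ containing $v$ but not $v'$ is paired with the unique simplex $\tau(\sigma)$ supplied by the domination condition (so $\sigma = d_{v'}\tau(\sigma)$), and the strong collapse is realized by simple collapses of exactly these pairs. Where you genuinely diverge is in the scheduling and the freeness argument. The paper proceeds greedily: it picks a \emph{maximal} simplex $\tau$ containing $v$ (hence containing $v'$), shows $d_{v'}(\tau)$ is a free face by combining maximality with the universality clause of domination, removes the pair, and then iterates --- which requires the extra observation that $v$ remains dominated by $v'$ in the partially collapsed complex, and ends with a separate final collapse of the dominated edge $e$. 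You instead fix the entire matching upfront and collapse in order of strictly decreasing dimension, in the style of a discrete Morse matching; freeness at each stage then follows from a pure dimension count, since every competing coface lies in $S\sqcup T$ in a dimension already processed, so you never need persistence of domination nor the maximal-simplex observation, and the edge $e$ is just the last pair rather than a special case. The paper's route keeps every step inside a complex where the original hypothesis is verified afresh; your route buys a more uniform global argument at the cost of bookkeeping about which cells survive at each stage. One point to tighten: a proper coface of $\tau(\sigma)$ need not have dimension exactly $m+1$, but any such coface contains both $v$ and $v'$, hence lies in $T$ in dimension greater than $m$ and is already removed (and likewise all cofaces of $\sigma$ other than $\tau(\sigma)$ are gone, not just the codimension-one ones), so the freeness claim is sound once stated this way.
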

	\begin{proof}
	We may assume that $Y=X\bs v$ for a dominated vertex $v$ of $X$. Furthermore, we assume that $v$ is dominated by $v'$. 
	Any maximal simplex $\tau$ that contains $v$ contains $v'$ as well. Let $\sigma$ be the face $d_{v'}(\tau)$ of $\tau$. Since $v$ is dominated, $\tau$ is the unique simplex such that $V(\tau)=V(\sigma) \cup \{v'\}$ and $\sigma \prec \tau$. We will show that $\sigma$ is a free face of $\tau$.
	We assume that $\sigma$ has a coface $\tau'$, i.e., $\sigma \prec \tau'$. We have a simplex $\rho$ such that $V(\rho)=V(\tau') \cup \{v'\}$ and $\tau' \preceq \rho$. The universality of $\tau$ implies that $\rho$ contains $\tau$ as a face. Moreover, the maximality of $\tau$ implies that $\rho=\tau$. The order relations $\sigma \prec \tau' \preceq \tau$ and $\dim \tau= (\dim \sigma) +1$ state that $\tau=\tau'$.
Thus, $\sigma$ is a free face of $\tau$ and we obtain an elementary simple collapse $|X| \searrow |X_1|=|X \bs \{\sigma, \tau\}|$. 
	We notice that $v$ remains to be dominated by $v'$ in $X_1$. If we iterate this process, then the last step is a collapse of the dominated edge $e$, which is the minimal simplex containing $v$ and $v'$. Thus, we have a sequence of elementary simple collapses:
	\[
		|X| \searrow |X_1| \searrow \ldots \searrow |X_n|=|X \bs v| \cup_{v'} |e| \searrow |X \bs v|.
	\]
	\end{proof}

	\begin{definition}
	A $\Delta$-complex $X$ is called {\em minimal} if it has no dominated vertex. A $\Delta$-subcomplex $Y$ of $X$ is called a {\em core} if it is minimal and $X \ses Y$.
	\end{definition}

	\begin{proposition}\label{unique_minimal_delta}
	Let $X$ be a minimal $\Delta$-complex. If $f \co  X \to X$ is contiguous to the identity map $1_X$, then $f=1_X$.
	\end{proposition}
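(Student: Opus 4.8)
The plan is to follow the scheme of Proposition~\ref{F=1}, with the caveat that the vertices of a $\Delta$-complex carry no order, so the inductive base case must be argued differently and the real work is concentrated there.

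First I would reduce to a single contiguity. Since $\sim$ is the transitive closure of the symmetric relation $\sim_c$, a contiguity $f\sim 1_X$ is a chain $f=f_0\sim_c f_1\sim_c\cdots\sim_c f_m=1_X$, and it suffices to prove the implication when $f\sim_c 1_X$: then $f_{m-1}=1_X$, hence $f_{m-2}=1_X$, and so on down to $f_0=f$. So I assume that for every simplex $\sigma$ there is a unique $\rho_\sigma$ with $V(\rho_\sigma)=V(\sigma)\cup V(f(\sigma))$ and $\sigma,f(\sigma)\preceq\rho_\sigma$. Next I would reduce the whole statement to showing that $f$ fixes every vertex. Indeed, if $f(w)=w$ for all $w\in X_0$, then condition~(1) of Definition~\ref{Delta_map} gives $V(f(\sigma))=V(\sigma)$, so $\dim f(\sigma)=\dim\sigma$; contiguity at $\sigma$ then produces $\rho_\sigma$ with $V(\rho_\sigma)=V(\sigma)$ and $\sigma,f(\sigma)\preceq\rho_\sigma$, and comparing dimensions forces $\sigma=\rho_\sigma=f(\sigma)$, i.e.\ $f=1_X$.

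To see that $f$ fixes vertices, I would argue by contradiction: if $f(v)=v'\neq v$, I claim $v$ is dominated by $v'$, contrary to minimality. By the remark after Definition~\ref{dominated} it is enough to find, for each $\sigma$ with $v\in V(\sigma)$ and $v'\notin V(\sigma)$, a unique $\tau$ with $V(\tau)=W:=V(\sigma)\cup\{v'\}$ and $\sigma\preceq\tau$. Existence is routine: as $v'=f(v)\in V(f(\sigma))$ we have $W\subseteq V(\rho_\sigma)$, and the face of $\rho_\sigma$ spanned by $W$ has $\sigma$ as a face.

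The main obstacle is uniqueness, precisely because $f$ may move vertices of $\sigma$ other than $v$; then $\rho_\sigma$ is strictly larger than $\tau$ and its uniqueness does not transfer directly. My idea is to invoke contiguity a second time, now at an arbitrary candidate $\tau$, getting $\kappa:=\rho_\tau$ with $\tau,f(\tau)\preceq\kappa$. Since $\sigma=d_{v'}(\tau)$, condition~(2) of Definition~\ref{Delta_map} yields $f(\sigma)=d_{f(v')}(f(\tau))\preceq\kappa$, while $\sigma\preceq\tau\preceq\kappa$; as $V(\rho_\sigma)\subseteq V(\kappa)$, the face of $\kappa$ spanned by $V(\rho_\sigma)$ contains both $\sigma$ and $f(\sigma)$ and has the correct vertex set, so the uniqueness clause of contiguity at $\sigma$ identifies it with $\rho_\sigma$, giving $\rho_\sigma\preceq\kappa$. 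Then $\tau$ and the face of $\rho_\sigma$ spanned by $W$ are two faces of $\kappa$ with the same vertex set $W$, hence equal because they share the coface $\kappa$ (the uniqueness remark for the face poset in Section~3). Thus every candidate coincides with the face of $\rho_\sigma$ spanned by $W$, which yields uniqueness, the domination of $v$ by $v'$, and the desired contradiction.
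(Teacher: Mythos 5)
Your proposal is correct and follows essentially the same route as the paper: assuming $f(v)\neq v$, you show $v$ is dominated by $f(v)$ by extracting the face spanned by $V(\sigma)\cup\{f(v)\}$ from the contiguity simplex $\rho_\sigma$, and you establish uniqueness exactly as the paper does, by invoking contiguity a second time at the candidate coface, using the uniqueness clause at $\sigma$ to embed $\rho_\sigma$ in the resulting simplex, and concluding via the common-coface property of $\Delta$-complexes. Your preliminary reduction from the contiguity class $\sim$ to a single contiguity $\sim_c$, and your up-front reduction to vertices, are minor reorganizations of what the paper does implicitly.
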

	\begin{proof}
	For a vertex $v \in X_0$, we assume that $f(v) \neq v$. We will show that $v$ is dominated by $f(v)$. For any simplex $\sigma \in X_n$ with $v \preceq \sigma$, there exists a unique simplex $\tau$ such that $V(\tau)=V(f(\sigma)) \cup V(\sigma)$ and $f(\sigma),\sigma \preceq \tau$. Since $f(v) \preceq f(\sigma)$, we obtain the face $\tau_{f(v)}$ of $\tau$ with $V(\tau_{f(v)})=V(\sigma) \cup \{f(v)\}$ and $\sigma \preceq \tau_{f(v)}$. 
	It remains to show the universality of $\tau_{f(v)}$. We assume that we have a simplex $\tau'$ with $V(\tau')=V(\sigma) \cup \{f(v)\}$ and $\sigma \preceq \tau'$.
	By contiguity, there is a unique simplex $\rho$ such that $V(\rho)=V(f(\tau')) \cup V(\tau')$ and $\tau', f(\tau') \preceq \rho$.
	We notice that $\sigma \preceq \tau' \preceq \rho$ and $f(\sigma) \preceq f(\tau') \preceq \rho$ and the universality of $\tau$ ensures that $\tau \preceq \rho$. Since $\tau_{f(v)}$ and $\tau'$ have a common coface $\rho$, they must be equal.
	Thus, $v$ is dominated by $f(v)$. This contradicts the minimality of $X$.
	Therefore, $f(v)=v$ for every vertex $v$. Moreover, for a general simplex $\sigma$ of $X$, by contiguity and the fact that  $V(\sigma)=V(f(\sigma))$, we conclude that $f(\sigma)=\sigma$.
	\end{proof}

	The next corollary can be proved as Corollary \ref{unique_core}.

	\begin{corollary}\label{unique_core_delta}
	For a $\Delta$-complex $X$, a core is uniquely determined up to isomorphism.
	\end{corollary}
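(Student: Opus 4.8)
The plan is to mirror the proof of Corollary \ref{unique_core} for acyclic categories, replacing natural transformations by contiguities and invoking Proposition \ref{unique_minimal_delta} in the role played there by Proposition \ref{F=1}. Let $X_1$ and $X_2$ be two cores of $X$, so that $X \ses X_i$ for each $i=1,2$. Every elementary strong collapse in a defining sequence $X \ses X_i$ deletes a dominated vertex $v$, and Proposition \ref{deformation_delta} tells us that the associated retraction $r_v$ composed with the inclusion $X \bs v \hookrightarrow X$ is contiguous to the identity. Composing the elementary retractions along the whole sequence yields a $\Delta$-map $R_i \co X \to X_i$ that is a genuine retraction, so $R_i \circ J_i = 1_{X_i}$, where $J_i \co X_i \to X$ is the inclusion, and that satisfies $J_i \circ R_i \sim 1_X$.

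First I would record that the contiguity relation $\sim$ is compatible with composition of $\Delta$-maps, i.e. $f \sim f'$ implies $g \circ f \sim g \circ f'$ and $f \circ h \sim f' \circ h$ whenever the composites are defined. This is the $\Delta$-analog of the compatibility of natural transformations with composition used silently in the acyclic case. Granting it, set $g = R_2 \circ J_1 \co X_1 \to X_2$ and $h = R_1 \circ J_2 \co X_2 \to X_1$. Then
\[
h \circ g = R_1 \circ (J_2 \circ R_2) \circ J_1 \sim R_1 \circ 1_X \circ J_1 = R_1 \circ J_1 = 1_{X_1},
\]
and symmetrically $g \circ h \sim 1_{X_2}$.

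Now I would invoke Proposition \ref{unique_minimal_delta}. Since $X_1$ is minimal and $h \circ g \co X_1 \to X_1$ is contiguous to $1_{X_1}$, we conclude $h \circ g = 1_{X_1}$; likewise $g \circ h = 1_{X_2}$ by minimality of $X_2$. Hence $g$ and $h$ are mutually inverse $\Delta$-maps, and therefore $X_1 \cong X_2$.

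The step I expect to be the main obstacle is the compatibility of $\sim$ with composition, specifically \emph{post}composition. Precomposition is immediate: for a $\Delta$-map $h \co W \to X$, the common coface $\tau$ witnessing $f(h\omega) \sim_c f'(h\omega)$ for each simplex $\omega$ of $W$ is supplied directly by $f \sim_c f'$. Postcomposition with $g \co Y \to Z$ requires taking $g(\tau)$ as the candidate common coface; checking $V(g(\tau)) = V(gf(\sigma)) \cup V(gf'(\sigma))$ is routine from condition (1) of Definition \ref{Delta_map}, but one must still verify the \emph{uniqueness} clause in the definition of $\sim_c$, namely that no other simplex of $Z$ with the prescribed vertex set admits both $gf(\sigma)$ and $gf'(\sigma)$ as faces. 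This is where the failure of a simplex to be determined by its vertices in a $\Delta$-complex must be handled with the ``common coface forces equality'' property noted after Definition \ref{Delta_complex}. Once this compatibility is established, the homotopy-chasing above together with the two applications of Proposition \ref{unique_minimal_delta} closes the argument exactly as in Corollary \ref{unique_core}.
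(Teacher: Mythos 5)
Your strategy is exactly the one the paper intends (its proof of this corollary is simply ``proceed as in Corollary~\ref{unique_core}''), and you have correctly located where all the weight falls: compatibility of $\sim$ with composition. But that lemma is \emph{false} for $\Delta$-complexes, precisely in the postcomposition direction you flagged, and your suggested repair does not save it. Concretely, let $W$ be a single vertex, let $Y$ be a single $1$-simplex with vertices $x,y$, and let $Z=B\cS^1$ be the circle with two vertices $x,y$ and two parallel edges $e_1,e_2$ (Example~\ref{S^1}). Let $f,f'\co W\to Y$ be the two constant maps and $g\co Y\to Z$ the $\Delta$-map carrying the edge of $Y$ to $e_1$. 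Then $f\sim_c f'$, because the edge of $Y$ is the \emph{unique} simplex with vertex set $\{x,y\}$; but $g\circ f$ and $g\circ f'$ are the constant maps at $x$ and $y$ in $Z$, and these do not even lie in the same contiguity class: a witnessing simplex would need vertex set $\{x,y\}$ and both vertices as faces, and \emph{both} $e_1$ and $e_2$ qualify, so the uniqueness clause of Definition~\ref{contiguity} fails (and there are no other $\Delta$-maps $W\to Z$ to chain through). This is the same phenomenon as the paper's remark that $BF\not\sim BG$ for the constant functors $\cS^0\to\cS^1$. Your appeal to the ``common coface forces equality'' property cannot close the hole: that property concerns two simplices with equal vertex sets admitting a common \emph{coface}, whereas here $g(\tau)$ and a competing simplex $\rho$ are parallel \emph{cofaces} of $gf(\sigma)$ and $gf'(\sigma)$, and nothing forces them to be comparable; in the example $e_1\neq e_2$.

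Part of your argument does survive: precomposition, as you say, and also postcomposition along inclusions of subcomplexes of the form $X\bs\{v_1,\ldots,v_k\}$, since such subcomplexes are full (any competing witness in $X$ whose vertices avoid the deleted ones already lies in the subcomplex), and this suffices to prove $J_i\circ R_i\sim 1_X$. The genuinely problematic step is the last one, deducing $R_1\circ(J_2R_2)\circ J_1\sim R_1\circ J_1$, where a chain of contiguities must be pushed forward through the retraction $R_1$; and elementary retractions do \emph{not} preserve contiguity in general. For example, take the $2$-simplex on vertices $v,v',a$ (with its three edges) and attach one extra parallel edge joining $v'$ and $a$: then $v$ is dominated by $v'$, and the constant maps at $v$ and at $a$ are contiguous in $X$ (the edge joining $v$ and $a$ is unique), yet their composites with $r_v$ are the constants at $v'$ and $a$, which are joined by \emph{two} parallel edges in $X\bs v$, so contiguity is destroyed. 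Hence your proof has a genuine gap at this step; closing it requires exploiting the specific form of the maps $R_i$, $J_i$ and of the witnessing cofaces produced by collapses, not a general composition lemma --- a subtlety that, in fairness, the paper's one-line proof glosses over as well. (A smaller point: Proposition~\ref{unique_minimal_delta} is stated for one-step contiguity $\sim_c$, while you feed it a chain; that part is harmless, since $\sim_c$ is symmetric and one can induct along the chain.)
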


	\begin{lemma}\label{iso_delta}
	Isomorphic $\Delta$-complexes have the same strong homotopy type.
	\end{lemma}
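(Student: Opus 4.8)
The plan is to imitate the proof of Lemma \ref{iso_acyclic} for acyclic categories, replacing the bridging morphism between $a_i$ and $Fa_i$ by a bridging edge between a vertex and its image. Fix an isomorphism $F \co X \to Y$; it induces a dimension-preserving bijection on simplices compatible with $V$ and the face maps. Enumerate the vertices $X_0 = \{a_1, \ldots, a_m\}$ and write $b_i = F(a_i)$. First I would interpolate between $X$ and $Y$ by $\Delta$-complexes $X_i$ whose vertex set is $\{b_1, \ldots, b_i, a_{i+1}, \ldots, a_m\}$, obtained by transporting through $F$ exactly those simplices whose labels involve the first $i$ vertices; thus $X_0 = X$ and $X_m = Y$, and $X_{i-1}$ and $X_i$ differ only in whether the $i$-th vertex is recorded as $a_i$ or $b_i$.

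The core of the argument is a bridge complex $Z_i$ in which both $a_i$ and $b_i$ occur and each is dominated by the other. Its vertex set is $\{b_1, \ldots, b_i, a_i, a_{i+1}, \ldots, a_m\}$, and its simplices are those of $X_{i-1}$, those of $X_i$, together with one new simplex $\tau_\sigma$ of dimension $\dim\sigma + 1$ and vertex set $V(\sigma) \cup \{b_i\}$ for every simplex $\sigma$ of $X_{i-1}$ with $a_i \preceq \sigma$. The face maps on $\tau_\sigma$ are prescribed by $d_{b_i}\tau_\sigma = \sigma$, by $d_{a_i}\tau_\sigma = \sigma'$ (the simplex of $X_i$ corresponding to $\sigma$ under $F$), and by $d_w\tau_\sigma = \tau_{d_w\sigma}$ for the remaining $w \in V(\sigma)$. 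In particular $\tau_{a_i}$ is an edge $e$ joining $a_i$ and $b_i$. One then checks that these data satisfy the two commutativity conditions of Definition \ref{Delta_complex}, so that $Z_i$ is a genuine $\Delta$-complex, and that $Z_i \bs a_i = X_i$ while $Z_i \bs b_i = X_{i-1}$.

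Next I would verify that $a_i$ is dominated by $b_i$ (and, symmetrically, $b_i$ by $a_i$) in $Z_i$, using the family $\{\tau_\sigma\}$ as the required cofaces. Given a simplex $\sigma$ of $Z_i$ with $a_i \preceq \sigma$: if $b_i \in V(\sigma)$ then $\sigma$ itself is the required coface; otherwise $\sigma$ is a simplex of $X_{i-1}$ containing $a_i$, and $\tau_\sigma$ has the correct vertex set and contains $\sigma$ as a face. This furnishes existence, and hence $Z_i \ses X_i$ by removing $a_i$ and $Z_i \ses X_{i-1}$ by removing $b_i$, with $e$ the dominated edge in each case. Concatenating the valleys $X_{i-1} \la Z_i \ra X_i$ for $i = 1, \ldots, m$ exhibits $X = X_0$ and $Y = X_m$ as having the same strong homotopy type.

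The main obstacle is the uniqueness half of the domination condition in Definition \ref{dominated}: I must rule out a second coface $\tau \neq \tau_\sigma$ with $V(\tau) = V(\sigma) \cup \{b_i\}$ and $\sigma \preceq \tau$. Because simplices of a $\Delta$-complex need not be determined by their vertices, two distinct simplices of $X_{i-1}$ could share the vertex set $V(\sigma)$, producing distinct candidate simplices on the same vertices; the point is that $V(\tau) = V(\sigma)\cup\{b_i\}$ forces $\sigma \preceq \tau$ to be a codimension-one relation, and the only codimension-one face of $\tau_\rho$ that contains $a_i$ but not $b_i$ is $\rho$ itself, so $\sigma \preceq \tau_\rho$ forces $\rho = \sigma$ and hence $\tau = \tau_\sigma$. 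Establishing this, together with the routine but bookkeeping-heavy check of the two $\Delta$-complex axioms for $Z_i$, is where the care is needed; everything else parallels the categorical argument of Lemma \ref{iso_acyclic}.
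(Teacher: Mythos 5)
Your proposal is correct and follows essentially the same route as the paper: interpolating complexes $X(i)$ obtained by renaming the first $i$ vertices through the isomorphism, a bridge complex $Z(i)$ containing both $v_i$ and $\varphi(v_i)$ with new cone simplices over the simplices containing $v_i$, mutual domination of the two vertices, and strong collapses $Z(i) \ses X(i)$, $Z(i) \ses X(i-1)$. Your explicit codimension-one analysis for the uniqueness of the coface is a welcome elaboration of a step the paper dismisses with ``we can easily verify,'' but it is the same construction and argument.
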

	\begin{proof}
	Let $\varphi \co X \to Y$ be an isomorphism between $\Delta$-complexes. We choose a total order on vertices $X_0=\{v_1,\ldots,v_m\}$, and define a $\Delta$-complex $X(i)$ as follows: the set of vertices is 
	\[
		X(i)_0=\{\varphi(v_1), \ldots, \varphi(v_i),v_{i+1},\ldots,v_m\}
	\] 
	and the set of $n$-simplices is
	\begin{align*}
		X(i)_n &= \{ \tau \in Y_n \mid V(\tau) \subset \{\varphi(v_k)\}_{k=1}^{i}\} \\
		&\cup \{ \sigma \in X_n \mid V(\sigma) \subset \{v_k\}_{k=i+1}^{m}\} \\
		&\cup \{\sigma' \mid \sigma \in X_n, v_j, v_k \in V(\sigma) \textrm{ for some } j \leq i <k \},
	\end{align*}
where $V(\sigma') = \{\varphi(v_k)\}_{v_k \preceq \sigma, k \leq i} \cup \{v_k\}_{v_k \preceq \sigma, i<k}$ and
	\[
		d_w(\sigma') = 
		\begin{cases}
			\sigma & \textrm{if $w$ is the unique vertex of $Y$ contained in $\sigma'$}, \\
			\varphi(\sigma) & \textrm{if $w$ is the unique vertex of $X$ contained in $\sigma'$}, \\ 
			(d_w(\sigma))' & \textrm{otherwise}. 
		\end{cases}
	\]
	We have the canonical isomorphism $\varphi_i \co X(i) \to X$ by $\varphi$ for each $i$. Moreover, we consider another $\Delta$-complex $Z(i)$ as follows: the set of vertices is 
	\[
		Z(i)_0=\{\varphi(v_1), \ldots, \varphi(v_i),v_i,v_{i+1},\ldots,v_m\}
	\]
and the set of $n$-simplices is
	\[
		Z(i)_n = X(i)_n \cup X(i-1)_n \cup \{\varphi(v_i)\sigma \mid \sigma \in X(i-1)_{n-1}, v_i \preceq \sigma\},
	\]
where $V(\varphi(v_i)\sigma) = V(\sigma) \cup \{\varphi(v_i)\}$ and 
	\[
		d_w(\varphi(v_i)\sigma) = 
		\begin{cases}
			\sigma & \textrm{if $w=f(v_i)$}, \\
			\varphi_i^{-1}(\varphi_{i-1}(\sigma)) & \textrm{if $w=v_i$}, \\
			\varphi(v_i)d_w(\sigma) & \textrm{otherwise}.
		\end{cases}
	\]
	We can easily verify that $v$ is dominated by $\varphi(v)$ in $Z(i)$. Conversely, $\varphi(v_i)$ is dominated by $v_i$. For any simplex $\sigma$ in $Z(i)_n$ with $\varphi(v_i) \preceq \sigma$ and $v_i \not\preceq \sigma$, it belongs to $X(i)_n$.
	The simplex $\tau=\varphi_{i-1}^{-1}\varphi_i(\sigma)$ in $X(i-1)_n$ contains $v_i$ and $\varphi(v_i)\tau$, and is the unique simplex such that $V(\varphi(v_i)\tau)= V(\tau) \cup \{\varphi(v_i)\} = V(\sigma) \cup \{v_i\}$ and $d_{v_i}(\varphi(v_i)\tau)=\sigma$. 
	Thus, $Z(i) \ses X(i)$ and $Z(i) \ses X(i-1)$. This implies that $X(i)$ and $X(i-1)$ have the same strong homotopy type for each $i$. Moreover, $X=X(0)$ and $Y=X(m)$ have the same strong homotopy type as well.
	\end{proof}

	The following corollary is proved  by an argument similar to that of Corollary \ref{acyclic_simeq_strong} and Lemma \ref{iso_delta}.

	\begin{corollary}
	Two $\Delta$-complexes $X$ and $Y$ have the same strong homotopy type if and only if $X \sim Y$.
	\end{corollary}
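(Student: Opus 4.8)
The plan is to mirror the proof of Corollary \ref{acyclic_simeq_strong} line by line, translating the categorical vocabulary into the $\Delta$-complex one: functors become $\Delta$-maps, homotopy through natural transformations becomes contiguity $\sim$, Proposition \ref{F=1} becomes Proposition \ref{unique_minimal_delta}, Corollary \ref{unique_core} becomes Corollary \ref{unique_core_delta}, and Lemma \ref{iso_acyclic} becomes Lemma \ref{iso_delta}. The only genuinely new input is Proposition \ref{deformation_delta}, which guarantees that an elementary strong collapse produces a strong equivalence of $\Delta$-maps.

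For the easy direction, suppose $X$ and $Y$ have the same strong homotopy type. It suffices to treat a single elementary strong collapse $X \ses X \bs v$ and then chain the resulting strong equivalences. I would take the retraction $r_v \co X \to X \bs v$ associated with the dominated vertex $v$ and the inclusion $\iota \co X \bs v \hookrightarrow X$. Since $r_v$ fixes every simplex not containing $v$, the composite $r_v \circ \iota$ is literally $1_{X \bs v}$, while Proposition \ref{deformation_delta} gives $\iota \circ r_v \sim 1_X$. Hence $\iota$ is a strong equivalence and $X \sim X \bs v$; running along the defining sequence of collapses and expansions and composing then yields $X \sim Y$.

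For the converse, assume $X \sim Y$, with strong equivalences $f \co X \to Y$ and $g \co Y \to X$ satisfying $g f \sim 1_X$ and $f g \sim 1_Y$. Let $X_0$ and $Y_0$ be cores of $X$ and $Y$, which exist by successively deleting dominated vertices and are unique up to isomorphism by Corollary \ref{unique_core_delta}. As in the easy direction, the strong collapses $X \ses X_0$ and $Y \ses Y_0$ furnish retractions $R_X, R_Y$ and inclusions $J_X, J_Y$ with $R J = 1$ and $J R \sim 1$. Then $R_Y f J_X \co X_0 \to Y_0$ and $R_X g J_Y \co Y_0 \to X_0$ are mutually inverse strong equivalences: using $J_Y R_Y \sim 1_Y$ and $g f \sim 1_X$, the composite $(R_X g J_Y)(R_Y f J_X)$ is contiguous to $R_X g f J_X$, hence to $R_X J_X = 1_{X_0}$, and symmetrically in the other order. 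Since $X_0$ and $Y_0$ are minimal, Proposition \ref{unique_minimal_delta} forces these composites to be the identities, so $R_Y f J_X$ is an isomorphism and $X_0 \cong Y_0$. By Lemma \ref{iso_delta}, $X_0$ and $Y_0$ have the same strong homotopy type; combined with $X \ses X_0$ and $Y \ses Y_0$, this shows $X$ and $Y$ have the same strong homotopy type.

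The main obstacle I anticipate is bookkeeping rather than conceptual: one must verify that contiguity $\sim$ is a \emph{congruence} with respect to composition of $\Delta$-maps, so that the chains of $\sim$ above can be assembled and freely pre- and post-composed (this is what makes strong equivalences composable and justifies every step where $\sim$ is substituted inside a composite). In the classical simplicial setting this is the standard fact that contiguous maps compose to contiguous maps; here it should follow directly from Definition \ref{contiguity}, but it is precisely the place where the looser, non-simplicial nature of $\Delta$-complexes, in which a simplex is not determined by its vertices, has to be handled carefully through the universality clauses in Definitions \ref{Delta_map} and \ref{contiguity}.
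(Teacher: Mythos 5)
Your skeleton is the paper's own: the paper proves this corollary by pointing back at Corollary \ref{acyclic_simeq_strong} and Lemma \ref{iso_delta}, exactly as you do. The problem is the issue you flag at the end and then dismiss as bookkeeping: it is not bookkeeping, and the general statement you need is false. Precomposition does preserve contiguity (if $u \sim_c u'$ then $u w \sim_c u' w$, immediately from Definition \ref{contiguity}), but postcomposition does not, and the obstruction is precisely the uniqueness clause. Concretely, let $X$ have vertices $a,v,v'$, edges $\tau$ on $\{a,v\}$, $e$ on $\{v,v'\}$, two parallel edges $f_1,f_2$ on $\{a,v'\}$, and a single $2$-simplex $\tilde\tau$ on $\{a,v,v'\}$ with $d_a\tilde\tau=e$, $d_v\tilde\tau=f_1$, $d_{v'}\tilde\tau=\tau$. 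One checks directly that $v$ is dominated by $v'$, and that $X \bs v$ is two vertices $a,v'$ joined by the two parallel edges $f_1,f_2$ (the complex $B\cS^1$ of Example \ref{S^1}). The two $\Delta$-maps from the one-point complex sending the point to $a$ and to $v$ are contiguous, via $\tau$, the unique simplex on $\{a,v\}$; but composing with $r_v$ gives the constant maps at $a$ and at $v'$, which do not even lie in the same contiguity class in $X \bs v$, because both $f_1$ and $f_2$ satisfy the defining condition and uniqueness fails. So ``$u \sim u'$ implies $wu \sim wu'$'' is false even when $w$ is the retraction $r_v$ of an elementary strong collapse.

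This bites your argument in two places. In the easy direction, ``chaining the resulting strong equivalences'' along the zigzag is transitivity of $\sim$, i.e.\ exactly the failed congruence; this part can be repaired: by Corollary \ref{unique_core_delta} all complexes in the zigzag have isomorphic cores, so there is an isomorphism $\varphi \co X_0 \to Y_0$, and with $\Phi = J_Y \varphi R_X$ and $\Psi = J_X \varphi^{-1} R_Y$ one computes $\Psi\Phi = J_X \varphi^{-1}(R_Y J_Y)\varphi R_X = J_X R_X \sim 1_X$, where the only substitution performed inside a composite is of the literal identity $R_Y J_Y = 1_{Y_0}$, so no congruence is invoked (and $J_X R_X \sim 1_X$ itself is established by induction over the elementary collapses, where the postcompositions occur only along inclusions $Z \bs \{v_1,\ldots,v_k\} \hookrightarrow Z$; for those, any competing simplex has all vertices in the subcomplex, hence lies in it, so uniqueness transfers and contiguity is preserved). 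In the converse direction, however, your key computation $(R_X g J_Y)(R_Y f J_X) \sim R_X g f J_X \sim R_X J_X = 1_{X_0}$ postcomposes the homotopies $J_Y R_Y \sim 1_Y$ and $gf \sim 1_X$ with $R_X g$ and with $R_X$ respectively; these are instances of the false lemma, and the composite cannot obviously be rearranged so that only literal identities are substituted. That direction therefore has a genuine gap --- one which, to be fair, the paper's own one-line proof glosses over as well; closing it requires an argument exploiting the specific maps at hand (or a proof that the particular postcompositions arising here do preserve contiguity), not the general congruence property you appeal to.
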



\section{Strong homotopy type of classifying spaces and face posets}

\subsection{Classifying spaces of acyclic categories}

	The {\em classifying space} $B\A$ of an acyclic category $\A$ is a $\Delta$-complex defined as follows:
	The set of $n$-simplices $B\A_n$ consists of sequences of composable $n$-morphisms that are formed of
	\[
		\sigma = a_0 \stackrel{f_1}{\to} a_1 \stackrel{f_2}{\to} \ldots \stackrel{f_{n}}{\to} a_n.
	\]
not including identities. The map of vertices is given by $V(\sigma)=\{a_0, \ldots,a_n\}$ and the face map $d_x$ is given by
	\[
		d_x(\sigma) =
		\begin{cases}
			a_1 \stackrel{f_2}{\to} \ldots \stackrel{f_{n}}{\to} a_n & \textrm{if } x=a_0, \\
			a_0 \stackrel{f_1}{\to} \ldots \to a_{i-1} \stackrel{f_{i+1}\circ f_i}{\to} a_{i+1} \to \ldots \stackrel{f_{n}}{\to} a_n &  \textrm{if } x=a_i, 1 \leq i \leq n-1\\
			a_0 \stackrel{f_1}{\to} \ldots \stackrel{f_{n-1}}{\to} a_{n-1} & \textrm{if } x=a_n. \\
		\end{cases}
	\]
	The classifying space gives rise to a functor from the category of acyclic categories to the category of $\Delta$-complexes. For a functor $F \co \A \to \B$ between acyclic categories, the $\Delta$-map $BF \co B\A \to B\B$ sends an $n$-simplex $\sigma$ of $B\A$ formed of the above to a simplex of $B\B$
	\[
		R(Fa_0 \stackrel{Ff_1}{\to} Fa_1 \stackrel{Ff_2}{\to} \ldots \stackrel{Ff_{n}}{\to} Fa_n),
	\]
where $R$ removes identities.

	\begin{proposition}\label{beat_dominate}
	Let $\A$ be an acyclic category. A beat object $x$ of $\A$ is a dominated vertex of $B\A$.
	\end{proposition}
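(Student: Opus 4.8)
The plan is to show that a beat object $x$ becomes a dominated vertex of $B\A$, dominated by the other endpoint of its beat morphism. By duality it suffices to treat the down beat case: reversing the order of each chain of composable morphisms gives a $\Delta$-complex isomorphism $B\A \cong B(\A^{\op})$ fixing every vertex, and an up beat object of $\A$ is a down beat object of $\A^{\op}$; since being a dominated vertex depends only on the map $V$ and the order $\preceq$, it is preserved under this isomorphism. So I assume $x$ is a down beat object with down beat morphism $f \co y \to x$, and I claim $x$ is dominated by $y$.

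First I would invoke the remark following Definition \ref{dominated} to reduce to simplices $\sigma = a_0 \xrightarrow{f_1} \cdots \xrightarrow{f_n} a_n$ of $B\A$ with $x = a_i \in V(\sigma)$ but $y \notin V(\sigma)$; here acyclicity forces $a_0 < a_1 < \cdots < a_n$ in $P(\A)$. For existence of the required $\tau$ with $V(\tau) = V(\sigma) \cup \{y\}$ and $\sigma \preceq \tau$, I would use the universality of the beat morphism: when $i \geq 1$ the nontrivial morphism $f_i \co a_{i-1} \to x$ factors uniquely as $f_i = f \circ \tilde{f_i}$, and I insert $y$ between $a_{i-1}$ and $x$ to form
\[
\tau = a_0 \xrightarrow{f_1} \cdots \xrightarrow{f_{i-1}} a_{i-1} \xrightarrow{\tilde{f_i}} y \xrightarrow{f} x \xrightarrow{f_{i+1}} \cdots \xrightarrow{f_n} a_n
\]
(when $i = 0$ I simply prepend $y \xrightarrow{f} x$). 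Since $y \notin V(\sigma)$ we have $a_{i-1} \neq y$, so $\tilde{f_i}$ is nontrivial and $\tau$ is a genuine simplex with $d_y(\tau) = \sigma$.

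For uniqueness, let $\tau'$ be any simplex with $V(\tau') = V(\sigma) \cup \{y\}$ and $\sigma \preceq \tau'$. The factorization gives $a_{i-1} < y < x$ in $P(\A)$, so acyclicity pins down the vertex order of $\tau'$, forcing $y$ to sit between $a_{i-1}$ and $a_i$ and forcing every morphism of $\tau'$ away from $y$ to agree with that of $\sigma$. The edge of $\tau'$ from $y$ to $x$ is some nontrivial $h \co y \to x$; by the down beat property $h = f \circ \tilde{h}$ with $\tilde{h} \co y \to y$, and acyclicity gives $\tilde{h} = \id_y$, hence $h = f$. Finally the edge $g \co a_{i-1} \to y$ satisfies $f \circ g = f_i = f \circ \tilde{f_i}$, so the uniqueness clause of the factorization yields $g = \tilde{f_i}$ and thus $\tau' = \tau$.

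The main obstacle is the uniqueness step. The crux is that simplices of a $\Delta$-complex are \emph{not} determined by their vertices, so I cannot merely compare vertex sets; instead I must argue that acyclicity fixes the linear order of the chain $\tau'$ and that the two freely chosen morphisms flanking $y$ are rigidified --- the one into $x$ by collapsing the self-factor $\tilde{h}$ to $\id_y$, and the one out of $a_{i-1}$ by the uniqueness clause of the down beat condition. Arranging these two applications of universality in the right order, together with the reduction to the case $y \notin V(\sigma)$, is where the care is needed.
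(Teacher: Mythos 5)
Your proof is correct and follows essentially the same route as the paper: reduce to the down beat case, restrict to simplices containing $x$ but not $y$, and build the required coface $\tau$ by inserting $y$ via the unique factorization $g_i = f \circ \tilde{g}_i$ through the beat morphism. The only difference is one of detail, not of approach --- the paper asserts that "the universality of $\tau$ follows from the property of the beat morphism $f$," whereas you spell out that argument (pinning the position of $y$ by acyclicity, rigidifying $h\co y \to x$ via $\A(y,y)=\{\id_y\}$, and invoking the uniqueness clause for the edge out of $a_{i-1}$), which is exactly the intended justification.
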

	\begin{proof}
	Let $f \co y \to x$ be the down beat morphism associated with a down beat object $x$. For any $n$-simplex 
	\[
		\sigma = a_0 \stackrel{g_1}{\to} \ldots \stackrel{g_{i-1}}{\to} a_{i-1} \stackrel{g_i}{\to} x \stackrel{g_{i+1}}{\to} a_{i+1} \stackrel{g_{i+2}}{\to} \ldots \stackrel{g_{n}}{\to} a_n
	\]
in $B\A$ not including $y$ as a vertex, we obtain an $(n+1)$-simplex
	\[
		\tau = a_0 \stackrel{g_1}{\to} \ldots \stackrel{g_{i-1}}{\to} a_{i-1} \stackrel{\tilde{g}_i}{\to} y \stackrel{f}{\to} x \stackrel{g_{i+1}}{\to} a_{i+1} \stackrel{g_{i+2}}{\to} \ldots \stackrel{g_{n}}{\to} a_n
	\]
satisfying $V(\tau)=V(\sigma) \cup \{y\}$ and $\sigma \preceq \tau$. The universality of $\tau$ follows from the property of the beat morphism $f$. This implies that $x$ is dominated by $y$ in the classifying space $B\A$. We can similarly show that an up beat object of $\A$ is a dominated vertex of $B\A$.
	\end{proof}

	The above proposition implies that if $\A \ses \A \bs x$, then $B\A \ses B\A \bs x = B(\A \bs x)$ for a beat object $x$ of $\A$. Repeating this process, we obtain the following result.

	\begin{proposition}\label{B>>}
	Let $\A$ be an acyclic category. For a full subcategory $\A_0$ of $\A$, if $\A \ses \A_0$, then $B\A \ses B\A_0$.
	\end{proposition}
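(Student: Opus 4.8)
The plan is to reduce to the single-step case that is already isolated in the paragraph immediately preceding the statement, and then to iterate. First I would unwind the definition of strong collapse (Definition \ref{acyclic_collapse}): the hypothesis $\A \ses \A_0$ means there is a finite sequence of full subcategories $\A = \C_0, \C_1, \ldots, \C_k = \A_0$ in which each step is an elementary strong collapse $\C_i \ses \C_{i+1}$, i.e.\ $\C_{i+1} = \C_i \bs x_i$ for some beat object $x_i$ of $\C_i$. Since strong collapses of $\Delta$-complexes are by definition concatenations of elementary strong collapses, it suffices to produce an elementary (or short) strong collapse $B\C_i \ses B(\C_i \bs x_i)$ for each $i$ and then compose these to obtain $B\A \ses B\A_0$.

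For a single step I would invoke Proposition \ref{beat_dominate}: the beat object $x_i$ of the acyclic category $\C_i$ is a dominated vertex of the $\Delta$-complex $B\C_i$. By the definition of elementary strong collapse for $\Delta$-complexes (Definition \ref{dominated}), this immediately yields $B\C_i \ses B\C_i \bs x_i$, where $B\C_i \bs x_i$ is the subcomplex obtained by deleting every simplex $\sigma$ with $x_i \preceq \sigma$. The only step requiring an actual argument rather than a citation is the identification $B\C_i \bs x_i = B(\C_i \bs x_i)$. I would establish this by comparing simplices directly. A simplex of $B\C_i$ is a chain of composable non-identity morphisms $a_0 \to \cdots \to a_n$, and by the description of the vertex map $V(\sigma)=\{a_0,\ldots,a_n\}$, one has $x_i \preceq \sigma$ if and only if $x_i \in \{a_0,\ldots,a_n\}$. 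Thus the simplices of $B\C_i \bs x_i$ are exactly the chains in $\C_i$ none of whose objects equals $x_i$. Since $\C_i \bs x_i$ is the full subcategory on $\ob(\C_i) \bs \{x_i\}$, its morphisms are precisely the morphisms of $\C_i$ between objects different from $x_i$, so a chain in $\C_i \bs x_i$ is exactly a chain in $\C_i$ avoiding $x_i$; hence the two families of simplices coincide. I would then check that the vertex and face maps of both complexes are the corresponding restrictions of those of $B\C_i$ — in particular a composite $f_{j+1}\circ f_j \co a_{j-1}\to a_{j+1}$ occurring in a face map stays in $\C_i\bs x_i$ whenever $a_{j-1},a_{j+1}\neq x_i$ — so the two subcomplexes agree on the nose, not merely up to isomorphism.

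I do not expect a genuine obstacle: the substantive content is entirely carried by Proposition \ref{beat_dominate}, and the remaining identification $B\A \bs x = B(\A \bs x)$ is routine once one observes that omitting an object from a full subcategory omits exactly the chains passing through that object. The only point demanding mild care is confirming that the face maps match under this identification, which holds because composition in the full subcategory agrees with composition in $\A$. Concatenating the collapses $B\C_0 \ses B\C_1 \ses \cdots \ses B\C_k = B\A_0$ then completes the argument.
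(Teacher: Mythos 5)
Your proposal is correct and is essentially the paper's own argument: the paper proves this proposition exactly by combining Proposition \ref{beat_dominate} with the identification $B\A \bs x = B(\A \bs x)$ for a beat object $x$ and iterating over the sequence of elementary strong collapses. The only difference is that you spell out the (routine but worthwhile) verification that deleting the vertex $x$ from $B\A$ yields precisely the classifying space of the full subcategory $\A \bs x$, including the compatibility of face maps, which the paper asserts without proof.
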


	\begin{theorem}\label{B_minimal}
	An acyclic category $\A$ is minimal if and only if $B\A$ is minimal.
	\end{theorem}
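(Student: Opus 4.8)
The plan is to prove the two implications separately, dispatching the forward direction by citing Proposition \ref{beat_dominate} and concentrating all the work on the converse.

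For the implication ``$B\A$ minimal $\Rightarrow \A$ minimal'' I would argue by contraposition: if $\A$ has a beat object $x$, then Proposition \ref{beat_dominate} already exhibits $x$ as a dominated vertex of $B\A$, so $B\A$ is not minimal. This half needs nothing new. The substance is the converse, which I would also prove by contraposition: assuming $B\A$ has a dominated vertex, I must manufacture a beat object of $\A$. So suppose a vertex $v$ (an object of $\A$) is dominated by a vertex $v'$, with dominated edge $e$. Since $e$ is a single morphism between $v$ and $v'$, after passing to $\A^{\op}$ if necessary --- which only interchanges down and up beat objects and leaves $B\A$ and the domination relation unchanged under the canonical chain-reversing isomorphism --- I may assume $e \co v' \to v$, i.e.\ $v' < v$ in $P(\A)$.

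The first step is to record what domination forces on low-dimensional simplices. Applying the domination condition to each edge $\sigma = (w \xrightarrow{g} v)$ with $w \neq v'$, the required $2$-simplex on the vertex set $V(\sigma) \cup \{v'\}$ is a chain of two composable morphisms, whose three vertices are therefore totally ordered in $P(\A)$; in particular $w$ is comparable to $v'$. Hence every object lying strictly below $v$ is comparable to $v'$. The decisive, and most delicate, step is the choice of the beat object, and here lies the main obstacle: $v$ itself need \emph{not} be a beat object. The cautionary example is the ``diamond'' poset $v' < w_1,w_2 < v$, where $v$ is dominated by $v'$ in $B\A$ while $\hat{\A}/v$ has no terminal object, the two incomparable middle elements obstructing it. The remedy I propose is to descend to a cover: choose an object $a$ with $v' \lessdot a \leq v$ (the successor of $v'$ on any chain from $v'$ to $v$), and claim that $a$ is a down beat object whose beat morphism is the unique $k \co v' \to a$.

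Two inputs feed this claim. First, every $w < a$ satisfies $w \leq v'$: indeed $w < a \leq v$ gives $w < v$, so $w$ is comparable to $v'$ by the previous step, and $v' \leq w < a$ is excluded because $a$ covers $v'$. Second, domination supplies the factorizations through $k$: applying it to the $2$-chain $(w \xrightarrow{g} a \to v)$ with $w < v'$, the unique $3$-simplex must insert $v'$ between $w$ and $a$, which forces a morphism $h \co w \to v'$ with $k h = g$; the \emph{uniqueness} clause of domination is precisely what makes $h$ unique (and, applied to $(a \to v)$, forces $\A(v',a)$ to be a singleton so that $k$ is well defined). Consequently $k \co v' \to a$ is terminal in $\hat{\A}/a$, so $a$ is a beat object and $\A$ is not minimal. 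When $a = v$, i.e.\ when $e$ is a covering relation, the same computation performed on the edges $(w \to v)$ shows that $v$ itself is down beat; this is the degenerate case of the argument.

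I expect the choice of $a$ and the verification that $\hat{\A}/a$ has a terminal object to be the crux of the proof. The point I would stress is that it is the uniqueness half of the domination condition, together with the coherence it imposes on $2$-dimensional simplices, that rigidifies the otherwise merely existential factorizations into an honest terminal object --- and that one must \emph{not} expect the dominated vertex to be the beat object, but rather a cover of its dominating vertex.
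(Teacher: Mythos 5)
Your proposal is correct, and its overall skeleton coincides with the paper's: both directions are argued by contraposition, the easy half is exactly Proposition \ref{beat_dominate}, the dominated edge is normalized (via $\A^{\op}$, which the paper leaves implicit) to run from the dominating vertex $v'$ to the dominated vertex $v$, the beat object is sought not at $v$ itself but at some object $a$ with $v' < a \leq v$, and the terminal-object property is extracted from the \emph{uniqueness} clause of domination applied to $2$-chains through $a$, which forces unique $3$-simplices and hence unique factorizations. Where you genuinely diverge is in the selection device and in how the unwanted chain configurations are excluded. The paper chooses a morphism $f' \co y \to x'$ minimal in a divisibility subposet of $P(y/\A)$ (so $f'$ divides $f$, is indecomposable, and is the only morphism $y \to x'$) and kills the bad $2$-simplex $y \to z \to x$ by indecomposability; you instead choose the object $a$ minimal in $\{b \mid v' < b \leq v\}$, i.e.\ a cover of $v'$, first prove the comparability lemma that every object below $v$ is comparable with $v'$, deduce that every proper predecessor of $a$ lies strictly below $v'$, and then the bad positions of $v'$ in the extended chains die by acyclicity alone; you also derive $\A(v',a)=\{k\}$ from domination rather than building it into the choice. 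The two recipes in fact select the same kind of object --- the paper's $x'$ is forced to be a cover of $y$, since an intermediate object would decompose $f'$, and your $k$ is automatically indecomposable and divides the dominated edge $e$ --- but your object-level formulation is somewhat more self-contained: it sidesteps the point, left implicit in the paper, that a minimal element of that restricted subposet really is indecomposable (this needs the additional observation, itself a consequence of domination, that the condition $\A(y,y')=\{k\}$ holds automatically for all $k \leq f$). The price you pay is the extra comparability lemma and an explicit split into the degenerate case $a=v$; the paper's morphism-level choice packages the analogous case distinctions into the minimality of $f'$.
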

	\begin{proof}
	If $\A$ is not minimal, then $B\A$ is not minimal, since the classifying space sends beat objects to dominated vertices by Proposition \ref{beat_dominate}.
	Conversely, we assume that $B\A$ is not minimal, i.e., it has a vertex $x$ dominated by $y$. There exists a unique edge spanning $x$ and $y$. It corresponds to a morphism $f$ of $\A$ between $x$ and $y$ and we may assume that $f \co y \to x$.
	We choose a morphism $f' \co y \to x'$ satisfying the following two conditions:
	\begin{itemize}
		\item $h \circ f' =f$ for some morphism $h \co x' \to x$.
		\item $f'$ is indecomposable, i.e., it can never be described as the composition of nontrivial morphisms.
		\item $f'$ is the only morphism between $y$ and $x'$.
	\end{itemize}
	That is, we take $f'$ to be a minimal element of the subposet of $P(y/\A)$ associated with the under category $y/\A$ of morphisms starting at $y$:  
	\[
		\{k \co y \to y' \in P(y/\A) \mid k \leq f, \A(y,y')=\{k\} \}.
	\]
	This poset includes $f$. Hence, we can find a minimal element. For any morphism $g \co z \to x'$, the composition $h \circ g \co z \to x$ associates a $1$-simplex of $B\A$ with $x \prec h \circ g$. 
Hence, there exists a $2$-simplex $\sigma$ of $B\A$ such that $V(\sigma)=(z,y,x)$ and $d_y(\sigma)= h \circ g \preceq \sigma$. We have two possibilities:
	\begin{enumerate}
		\item $\sigma = y \stackrel{i}{\to} z \stackrel{h \circ g}{\to} x$.
		\item $\sigma = z \stackrel{i}{\to} y \stackrel{f}{\to} x$ with $f \circ i=h \circ g$.
	\end{enumerate}
	Let us consider case (1). Since the morphism $f'$ is the unique morphism between $y$ and $x'$, we have $g \circ i=f'$. However, this contradicts the indecomposability of $f'$. Thus, we need only consider case (2). In order to prove that $x'$ is a down beat object, we will show that $g$ is uniquely factored through $f'$.
	For the $2$-simplex 
	\[
		\tau = z \stackrel{g}{\to} x' \stackrel{h}{\to} x,
	\]
in $B\A$ with $x \preceq \tau$, there exists a unique $3$-simplex $\upsilon$ with $V(\upsilon)=\{z,y,x',x\}$ and $d_y(\upsilon) =\tau$. The simplex $\upsilon$ can be described as follows:
	\[
		\xymatrix{
		z \ar[r]_{\tilde{g}} \ar@/^2ex/[rr]^{g} & y \ar[r]_{f'} & x' \ar[r]_{h} & x. 
		}
	\]
	The condition $d_y(\upsilon)=\tau$ implies that $f' \circ \tilde{g}=g$. 
	It remains to show the universality of $\tilde{g}$. If there exists $\tilde{g}'$ with $f' \circ \tilde{g}'=g$, we obtain a $3$-simplex $\upsilon'$ consists of $h$, $f'$ and $\tilde{g}'$. Then, $d_y \upsilon = \tau$ and $\upsilon=\upsilon'$ by the universality of $\upsilon$. We have $\tilde{g}=\tilde{g}'$. Thus, $x'$ is a down beat object of $\A$ and $\A$ is not minimal.
	\end{proof}

	\begin{theorem}\label{B_collapse}
	An acyclic category $\A$ is strongly collapsible if and only if the classifying space $B\A$ is strongly collapsible.
	\end{theorem}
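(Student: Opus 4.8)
The plan is to reduce both directions to the structural results already established about beat objects, dominated vertices, minimality, and the uniqueness of cores. The forward implication is immediate: if $\A$ is strongly collapsible, then $\A \ses *$, where $*$ denotes the trivial category, and Proposition \ref{B>>} yields $B\A \ses B* = *$, since the classifying space of the trivial category is a single vertex. Hence $B\A$ is strongly collapsible.

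For the converse I would argue through cores. Let $\A_0$ be a core of $\A$, so that $\A \ses \A_0$ with $\A_0$ minimal. Applying Proposition \ref{B>>} gives a strong collapse $B\A \ses B\A_0$, and Theorem \ref{B_minimal} guarantees that $B\A_0$ is minimal. Therefore $B\A_0$ is a core of $B\A$. Now assume $B\A$ is strongly collapsible, i.e. $B\A \ses *$. A single vertex has no dominated vertex, so $*$ is itself a core of $B\A$. By the uniqueness of cores for $\Delta$-complexes (Corollary \ref{unique_core_delta}), any two cores of $B\A$ are isomorphic, whence $B\A_0 \cong *$.

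It then remains to translate $B\A_0 \cong *$ back into a statement about the category. The vertices of $B\A_0$ are precisely the objects of $\A_0$ and the edges are precisely the nonidentity morphisms, so $B\A_0$ being a single vertex forces $\A_0$ to have exactly one object and no nontrivial morphisms; acyclicity then leaves only the identity endomorphism, so $\A_0 = *$. Consequently $\A \ses \A_0 = *$, and $\A$ is strongly collapsible.

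The only step that demands any real attention is the identification of $B\A_0$ as a \emph{core} of $B\A$, rather than merely as the target of some strong collapse. This is exactly where Theorem \ref{B_minimal} is needed: it transports the minimality of $\A_0$ to $B\A_0$, so that the uniqueness of cores (Corollary \ref{unique_core_delta}) can legitimately be invoked to conclude $B\A_0 \cong *$. Everything else is a direct combination of Proposition \ref{B>>} with the elementary observation that a single-vertex classifying space corresponds to the trivial acyclic category.
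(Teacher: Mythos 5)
Your proposal is correct and follows essentially the same route as the paper: the forward direction via Proposition \ref{B>>}, and the converse by passing to a core $\A_0$ of $\A$, using Theorem \ref{B_minimal} to identify $B\A_0$ as a core of $B\A$, and invoking the uniqueness of cores (Corollary \ref{unique_core_delta}) to conclude $B\A_0$ is a single vertex, hence $\A_0 = *$. Your explicit remark that a single vertex is itself minimal (so that $*$ is a core of $B\A$) just spells out a step the paper leaves implicit.
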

	\begin{proof}
	If $\A \ses *$, then $B\A \ses *$ by Proposition \ref{B>>}. Conversely, we assume that $B\A \ses *$. If $\A_{0}$ denotes a core of $\A$, then $\A \ses \A_0$ and $B\A \ses B\A_0$.
	Theorem \ref{B_minimal} implies that $B\A_0$ is a core of $B\A$ and $B\A_0$ consists of a single vertex, since a core is uniquely determined, up to isomorphism, by Corollary \ref{unique_core_delta}. 
Thus, $\A_0$ consists of a single object with the identity, and $\A \ses *$.
	\end{proof}

\subsection{Face posets of $\Delta$-complexes}

	The face posets of $\Delta$-complexes give rise to a functor from the category of $\Delta$-complexes to the category of posets. 
	For a $\Delta$-map $(k,f) \co X \to Y$, the order-preserving map $\chi(k,f) \co \chi(X) \to \chi(Y)$ is defined to be $f$.

	\begin{proposition}\label{chi>>}
	Let $X$ be a $\Delta$-complex and $Y$ be a subcomplex of $X$. If $X \ses Y$, then $\chi(X) \ses \chi(Y)$.
	\end{proposition}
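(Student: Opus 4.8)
The plan is to reduce to a single elementary strong collapse and then delete the relevant simplices of the face poset one at a time as beat objects. Since a strong collapse $X \ses Y$ is by definition a finite sequence of elementary strong collapses, and $\chi$ sends a subcomplex to a full subposet, it suffices to treat the case $Y = X \bs v$ for a single vertex $v$ dominated by a vertex $v'$; concatenating the resulting strong collapses of the face posets then yields $\chi(X) \ses \chi(Y)$ in general.

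So I fix $v$ dominated by $v'$. Passing from $\chi(X)$ to $\chi(X \bs v)$ amounts to deleting exactly the simplices $\sigma$ with $v \preceq \sigma$, and I would split these into two types: those containing $v$ but not $v'$ (type $A$) and those containing both $v$ and $v'$ (type $B$). The universality in Definition \ref{dominated} assigns to each type-$A$ simplex $\alpha$ the unique type-$B$ simplex $\beta = v'\alpha$ with $V(\beta) = V(\alpha) \cup \{v'\}$ and $\alpha \prec_1 \beta$; conversely $\alpha = d_{v'}\beta$, so this is a bijection between type-$A$ and type-$B$ simplices, each pair differing by the single vertex $v'$ in codimension one.

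The core of the argument is that I can remove all these simplices as beat objects of the poset $\chi(X)$ in two passes. In the first pass I delete the type-$A$ simplices in order of decreasing dimension, claiming that each such $\alpha$ is then a beat object whose set of strict cofaces has minimum $\beta = v'\alpha$. Indeed, once all higher-dimensional type-$A$ simplices are gone, every surviving proper coface $\gamma$ of $\alpha$ must contain $v'$, and the face of $\gamma$ on the vertex set $V(\alpha) \cup \{v'\}$ that still lies above $\alpha$ is forced to equal $\beta$ by the uniqueness in Definition \ref{dominated}; hence $\beta \preceq \gamma$. In the second pass I delete the type-$B$ simplices in order of increasing dimension, where each $\beta$ is a beat object because, after the first pass and the removal of the lower type-$B$ simplices, all surviving proper faces of $\beta$ are precisely the faces avoiding $v$, and these have maximum $d_v\beta$. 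Removing a beat object is an elementary strong collapse (Definition \ref{acyclic_collapse}), and after both passes exactly the simplices containing $v$ have been deleted, giving $\chi(X) \ses \chi(X \bs v)$.

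The step I expect to be the main obstacle is verifying the beat condition in the first pass, namely that every surviving coface $\gamma$ of a maximal-remaining type-$A$ simplex $\alpha$ lies above $\beta$. This rests on the fact that within a single simplex $\gamma$ the faces are in order-preserving bijection with the vertex subsets of $V(\gamma)$, a consequence of the simplicial identities in condition (2) of Definition \ref{Delta_complex}; this is what lets me extract from $\gamma$ a well-defined face on the vertices $V(\alpha) \cup \{v'\}$ lying above $\alpha$ and then invoke the uniqueness clause of domination to identify it with $\beta$. Organizing the deletions by the two dimension orders is exactly what guarantees that the only potential obstructions to the beat condition have already been removed at each step.
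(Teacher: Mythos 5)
Your proof is correct, and it takes a genuinely different route from the paper's. The paper's argument is soft and very short: the retraction $r_v \co X \to X \bs v$ constructed after Remark \ref{maximal_delta_simplicial} is a $\Delta$-map whose composite with the inclusion is contiguous to the identity (Proposition \ref{deformation_delta}); applying the functor $\chi$ turns this into a deformation retraction of finite posets $\chi(X) \to \chi(X \bs v)$, and the paper then simply cites Corollary 4.9 of \cite{BM12}, which converts a deformation retract of a finite poset into a strong collapse. You never use $r_v$ or that cited result; instead you exhibit the strong collapse $\chi(X) \ses \chi(X \bs v)$ explicitly, removing first the simplices containing $v$ but not $v'$ (in decreasing dimension, each such $\alpha$ being a beat object of the remaining poset because its surviving strict cofaces all lie above the unique simplex $\beta$ with $V(\beta)=V(\alpha)\cup\{v'\}$), and then the simplices containing both $v$ and $v'$ (in increasing dimension, each such $\beta$ being a beat object because its surviving strict faces are exactly the faces avoiding $v$, with maximum $d_v\beta$). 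Your verifications are sound, and the fact that carries them --- that the faces of a fixed simplex of a $\Delta$-complex are in order-preserving bijection with the subsets of its vertex set, by condition (2) of Definition \ref{Delta_complex} --- is precisely the observation the paper records right after defining face posets (two simplices with the same vertices and a common coface coincide); it is what legitimizes extracting from a coface $\gamma$ the face on $V(\alpha)\cup\{v'\}$ and identifying it with $\beta$ via the uniqueness clause of Definition \ref{dominated}. The trade-off: the paper's proof is two lines but outsources the poset combinatorics to \cite{BM12}, while yours is longer but entirely self-contained within this paper's definitions, produces an explicit collapsing sequence, and in effect reproves the instance of \cite{BM12}'s Corollary 4.9 that is actually needed here.
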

	\begin{proof}
	We assume that a vertex $v$ of $X$ is dominated by $v'$. It suffices to construct a deformation retraction $\chi(X) \to \chi(X \bs v)$ by Corollary 4.9 in \cite{BM12}. We have already constructed a deformation retraction $r_v \co X \to X \bs v$ after Remark \ref{maximal_delta_simplicial}. It induces the desired deformation retraction $\chi(r_v)$.
	\end{proof}

	By the above proposition, if a $\Delta$-complex $X$ has a dominated vertex, then $\chi(X)$ has a beat point. This implies the following corollary.

	\begin{corollary}\label{chi_minimal_delta}
	Let $X$ be a $\Delta$-complex. If $\chi(X)$ is minimal, then $X$ is minimal.
	\end{corollary}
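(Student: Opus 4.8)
The plan is to prove the contrapositive: if $X$ is not minimal, then $\chi(X)$ is not minimal. This is essentially the content of the remark placed immediately after Proposition \ref{chi>>}, so the whole argument amounts to unwinding what a strong collapse of posets means. First I would assume that $X$ has a dominated vertex $v$, say dominated by $v'$. Then, by Definition \ref{dominated}, there is an elementary strong collapse $X \ses X \bs v$, and Proposition \ref{chi>>} yields a strong collapse $\chi(X) \ses \chi(X \bs v)$ of face posets.

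The next step is to observe that this strong collapse is \emph{strict}, in the sense that $\chi(X \bs v)$ is a proper subposet of $\chi(X)$. Indeed, $X \bs v$ is obtained by deleting every simplex $\sigma$ with $v \preceq \sigma$; in particular the $0$-simplex $v$ itself is removed, so $v \in \chi(X)$ but $v \notin \chi(X \bs v)$. Hence $\chi(X) \neq \chi(X \bs v)$, and the strong collapse $\chi(X) \ses \chi(X \bs v)$ therefore consists of at least one elementary strong collapse.

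Finally I would invoke the definition of strong collapse for acyclic categories (Definition \ref{acyclic_collapse}): each elementary strong collapse removes a beat object, and $\chi(X)$ is a poset, so this removed object is a beat point of $\chi(X)$. Consequently $\chi(X)$ has a beat object and is not minimal, which is exactly the contrapositive of the desired implication. I do not expect a genuine obstacle here; the only point requiring attention is the verification that the induced collapse on face posets is nontrivial (so that a beat point genuinely appears rather than a possibly empty sequence of collapses), and this is guaranteed by the removal of the vertex $v$ itself.
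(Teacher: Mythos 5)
Your proposal is correct and matches the paper's own argument: the paper likewise deduces the corollary as the contrapositive of the observation that, via Proposition \ref{chi>>}, a dominated vertex of $X$ forces $\chi(X)$ to have a beat point. Your extra verification that $\chi(X \bs v) \subsetneq \chi(X)$ (so the induced strong collapse is nonempty) is a sensible precaution that the paper leaves implicit.
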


	\begin{remark}
	Unfortunately, the converse of Corollary \ref{chi_minimal_delta} is not true, in contrast with the case of classifying spaces in Theorem \ref{B_minimal}. For example, Barmak and Minian obtained an example of a simplicial decomposition on a $2$-simplex that is collapsible and not strongly collapsible (see Figure 2 in \cite{BM12}). This simplicial complex $X$ is minimal, whereas $\chi(X)$ is not minimal, since $X$ has a free face that is a beat object of $\chi(X)$.
	\end{remark}

	An analog of Theorem \ref{B_collapse} for face posets and strong collapsibility is true. We will prove it after examining the properties of barycentric subdivision in the next subsection. 

	\begin{theorem}\label{chi_collapse}
	A $\Delta$-complex $X$ is strongly collapsible if and only if the face poset $\chi(X)$ is strongly collapsible.
	\end{theorem}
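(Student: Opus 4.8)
The plan is to route the equivalence through the barycentric subdivision $\sd X = B\chi(X)$, which is a genuine simplicial complex because $\chi(X)$ is a poset, and to reduce everything to Theorem~\ref{B_collapse}. The forward implication requires no subdivision: the face poset of a single vertex is a one-point poset, so $X\ses *$ yields $\chi(X)\ses \chi(*)=*$ by Proposition~\ref{chi>>}, and hence $\chi(X)$ is strongly collapsible.

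For the converse one cannot simply transcribe the proof of Theorem~\ref{B_collapse}. There the unique cores $\A_0$ and $B\A_0$ were matched using Theorem~\ref{B_minimal}, i.e. the equivalence \emph{$\A$ minimal $\iff$ $B\A$ minimal}; but the analogue for $\chi$ is false. By the Remark following Corollary~\ref{chi_minimal_delta}, a minimal $\Delta$-complex may have a non-minimal face poset, so a core $X_0$ of $X$ need not satisfy \emph{$\chi(X_0)$ minimal}, and the core-matching step breaks down. This is exactly why the statement is deferred until the subdivision is in hand. Instead I would apply Theorem~\ref{B_collapse} to the acyclic category $\A=\chi(X)$, which gives
\[
\chi(X)\ses * \iff \sd X\ses *,
\]
since $\sd X = B\chi(X)$. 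Granting Theorem~\ref{B_collapse}, the theorem is therefore \emph{equivalent} to the subdivision statement $X\ses *\iff \sd X\ses *$.

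Thus the main obstacle is the single implication $\sd X\ses *\Rightarrow X\ses *$; its converse is already contained in Propositions~\ref{chi>>} and~\ref{B>>}. Here the core argument of Theorem~\ref{B_collapse} is of no help, because, as just noted, it would only reduce the claim back to the theorem itself for minimal complexes. One may still use cores to simplify to the minimal case: if $X_0$ is a core of $X$ then $\sd X\ses \sd X_0$ by Propositions~\ref{chi>>} and~\ref{B>>}, and uniqueness of cores (Corollary~\ref{unique_core_delta}) turns $\sd X\ses *$ into $\sd X_0\ses *$, so it suffices to show that a minimal $\Delta$-complex with more than one vertex has a subdivision that is not strongly collapsible. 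Establishing this genuinely new input — a direct comparison between strong collapses of $X$ and of $\sd X$, rather than one mediated by $\chi$ — is the real work, and it is precisely what the barycentric-subdivision results of the following subsection are designed to supply.
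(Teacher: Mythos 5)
Your proposal is correct and takes essentially the same route as the paper: the forward direction is Proposition \ref{chi>>}, and the converse passes from $\chi(X) \ses *$ to $\sd(X)=B\chi(X) \ses *$ (the paper uses Proposition \ref{B>>}; your appeal to Theorem \ref{B_collapse} is equivalent) and then invokes the subdivision theorem $\sd(X)\ses * \Rightarrow X \ses *$, which the paper proves independently as Theorem \ref{sd_delta_collapse} before giving the deferred proof of this statement. Your core-based reduction of that remaining implication to minimal complexes also matches the structure of the paper's proof of Theorem \ref{sd_delta_collapse}, so deferring that ``real work'' to the barycentric-subdivision subsection is exactly what the paper itself does.
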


	\begin{proposition}\label{chi_homotopic}
	If two $\Delta$-maps $(k,f), (\ell,g) \co X \to Y$ lie in the same contiguous class, then $\chi(k,f)$ and $\chi(\ell,g)$ are homotopic.
	\end{proposition}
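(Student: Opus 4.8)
The plan is to reduce to a single contiguity step and then manufacture an explicit order-preserving map that dominates both $f$ and $g$, producing a short zigzag of natural transformations. Since homotopy of functors ($\simeq$, Definition \ref{acyclic_homotopy}) is an equivalence relation and the relation $\sim$ is by definition generated by $\sim_c$, it suffices to treat the case $(k,f) \sim_c (\ell,g)$; concatenating the homotopies obtained at each elementary step handles the general contiguous class. So I assume $f$ and $g$ are directly contiguous.

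First I would define a map $h \co \chi(X) \to \chi(Y)$ on objects by sending a simplex $\sigma$ to the unique simplex $h(\sigma) = \tau$ guaranteed by Definition \ref{contiguity}, namely the one with $V(\tau) = V(f(\sigma)) \cup V(g(\sigma))$ and $f(\sigma), g(\sigma) \preceq \tau$. Well-definedness is immediate from the uniqueness clause in the definition of $\sim_c$. The heart of the argument is then to check that $h$ is order-preserving, i.e. a functor between the face posets. Given $\sigma \preceq \sigma'$, I use that $f = \chi(k,f)$ and $g = \chi(\ell,g)$ are order-preserving, so $f(\sigma) \preceq f(\sigma') \preceq h(\sigma')$ and $g(\sigma) \preceq g(\sigma') \preceq h(\sigma')$. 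Invoking the fact recorded after Definition \ref{Delta_complex} (two faces of a fixed simplex with a common coface coincide), the faces of the single simplex $h(\sigma')$ are indexed unambiguously by the subsets of $V(h(\sigma'))$ via iterated face maps, which are consistent by condition (2) of Definition \ref{Delta_complex}. Let $\rho \preceq h(\sigma')$ be the face with vertex set $V(f(\sigma)) \cup V(g(\sigma))$. Since the faces of $\rho$ cut out by $V(f(\sigma))$ and $V(g(\sigma))$ are faces of $h(\sigma')$ with those vertex sets, they must equal $f(\sigma)$ and $g(\sigma)$ respectively, so $f(\sigma), g(\sigma) \preceq \rho$. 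Thus $\rho$ satisfies the defining conditions of $h(\sigma)$, whence $\rho = h(\sigma)$ by uniqueness, and therefore $h(\sigma) = \rho \preceq h(\sigma') = h(\sigma')$, proving $h$ is a functor.

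Finally, the relations $f(\sigma), g(\sigma) \preceq h(\sigma)$ furnish, for each $\sigma$, morphisms $h(\sigma) \to f(\sigma)$ and $h(\sigma) \to g(\sigma)$ in the poset $\chi(Y)$ (recall a morphism exists from the larger simplex to its face). Because a poset, viewed as an acyclic category, has at most one morphism between any two objects, every naturality square commutes automatically, so these components assemble into natural transformations $h \Rightarrow f$ and $h \Rightarrow g$. The resulting zigzag $f \Leftarrow h \Rightarrow g$ exhibits $\chi(k,f) = f \simeq g = \chi(\ell,g)$, as required.

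I expect the order-preserving step to be the only real obstacle: everything hinges on the claim that the faces of a single simplex of a $\Delta$-complex are in canonical bijection with the subsets of its vertex set, so that the auxiliary face $\rho$ both exists and is unambiguous. This is precisely where the $\Delta$-complex axioms (the commuting face maps and the common-coface uniqueness) must be used carefully; once $\rho$ is pinned down, the contiguity uniqueness identifies it with $h(\sigma)$ and the rest is formal, the naturality being free in the poset setting.
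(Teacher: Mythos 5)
Your proof is correct and takes essentially the same route as the paper's: both define $h(\sigma)$ as the unique simplex supplied by the contiguity condition, prove $h$ is order-preserving by invoking that uniqueness, and conclude with a zigzag of natural transformations through $h$, naturality being automatic in a poset. Your write-up only fills in details the paper leaves terse (the reduction to a single $\sim_c$ step, and the identification of $h(\sigma)$ with the face of $h(\sigma')$ cut out by $V(f(\sigma)) \cup V(g(\sigma))$ via the common-coface uniqueness), and your zigzag $f \Leftarrow h \Rightarrow g$ versus the paper's $f \Rightarrow h \Leftarrow g$ is merely the opposite orientation convention for morphisms in the face poset, which is immaterial for the homotopy relation.
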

	\begin{proof}
	We define a map $h \co \chi(X) \to \chi(Y)$ by $h(\sigma)= \tau$, where $\tau$ is the unique simplex such that $V(\tau)=V(f(\sigma)) \cup V(g(\sigma))$ and $f(\sigma), g(\sigma) \preceq \tau$. For an ordered pair $\sigma \prec \sigma'$, we notice that $f(\sigma),g(\sigma) \preceq h(\sigma')$. The universality of $h(\sigma)$ implies that $h(\sigma) \preceq h(\sigma')$ and $h$ is order-preserving. 
	We have $\chi(k,f) \Rightarrow h \Leftarrow \chi(\ell,g)$ as functors. Hence, $\chi(k,f)$ and $\chi(\ell,g)$ are homotopic.
	\end{proof}

	\begin{remark}
	The analogous proposition of the above for classifying space is not true. It is true for finite posets as shown in Proposition 4.11 of \cite{BM12}. Let us consider $\cS^{0}$ consisting of two objects $x,y$ with only the identities and $\cS^1$ (which was introduced in Example \ref{S^1}) consisting of two object $x,y$ with two parallel morphisms $f,g$ between them. Let $F \co \cS^{0} \to \cS^{1}$ be the constant functor onto $x$ and let $G \co \cS^{0} \to \cS^{1}$ be the constant functor onto $y$.
	There is a natural transformation $t \co F \Rightarrow G$ defined by $f$. Hence, $F$ and $G$ are homotopic. However, $BF$ and $BG$ do not lie in the same contiguous class. There are only four $\Delta$-maps from $B\cS^{0}$ to $B\cS^1$ including $BF$ and $BG$ (the others are inclusion and switching).
$BF$ is not contiguous to any other $\Delta$-map, since $1$-simplices in $B\cS^1$ spanning $x$ and $y$ are not determined uniquely.
	\end{remark}

\subsection{Barycentric subdivisions of acyclic categories and $\Delta$-complexes}

	For an acyclic category $\A$, the barycentric subdivision $\sd(\A)$ is defined as the face poset $\chi(B\A)$ of the classifying space $B\A$. On the other hand, for a $\Delta$-complex $X$, the barycentric subdivision $\sd(X)$ is defined as the classifying space $B(\chi(X))$ of the face poset $\chi(X)$.
	The barycentric subdivision generally does not have the same strong homotopy type as the original acyclic category or $\Delta$-space. 
	However, strong collapsibility is compatible with the barycentric subdivision. We first focus on the strong collapsibility of a $\Delta$-complex $X$ and its barycentric subdivision $\sd(X)$. An $n$-simplex of $\sd(X)$ can be expressed as a sequence of simplices of $X$:
	\[
		\sigma_0 \prec \sigma_1 \prec \ldots \prec \sigma_n.
	\]
	For a simplex $\sigma$ of $X$, the barycenter $\hat{\sigma}$ is the vertex of $\sd(X)$ consisting of $\sigma$ itself.
	The next theorem can be proved as Theorem 4.15 in \cite{BM12}, but we need to pay attention to the universality of simplices.

	\begin{theorem}\label{sd_delta_collapse}
	Let $X$ be a $\Delta$-complex. Then, $X$ is strongly collapsible if and only if $\sd(X)$ is strongly collapsible.
	\end{theorem}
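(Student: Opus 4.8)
The plan is to prove the two implications separately and to reduce everything to the already-established equivalence (Theorem~\ref{B_collapse}) between strong collapsibility of an acyclic category and of its classifying space, applied to the poset $\chi(X)$ through the identity $\sd(X)=B\chi(X)$.

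For the forward implication, suppose $X \ses *$. Applying Proposition~\ref{chi>>} with the single vertex as target gives $\chi(X) \ses \chi(*)=*$, so $\chi(X)$ is strongly collapsible; Theorem~\ref{B_collapse} then yields that $\sd(X)=B\chi(X)$ is strongly collapsible. This direction is immediate and uses nothing beyond the compatibility of $\chi$ and $B$ with strong collapses.

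The converse, that $\sd(X) \ses *$ implies $X \ses *$, is the substantial part. First, Theorem~\ref{B_collapse} applied to $\chi(X)$ converts the hypothesis into the statement that the poset $\chi(X)$ is strongly collapsible. Next I would reduce to the minimal case. Let $X_0$ be the core of $X$ (Corollary~\ref{unique_core_delta}), so that $X \ses X_0$ and hence $\chi(X) \ses \chi(X_0)$ by Proposition~\ref{chi>>}. A strong collapse cannot enlarge the core: if $A \ses B$ and $A \ses *$, then $A \ses B \ses (\text{core of }B)$ exhibits the core of $B$ as a minimal category to which $A$ collapses, so by uniqueness of cores (Corollary~\ref{unique_core}) it must coincide with the point; thus strong collapsibility descends along $\chi(X) \ses \chi(X_0)$, and $\chi(X_0)$ is strongly collapsible. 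It therefore suffices to prove the following claim: a minimal $\Delta$-complex $X_0$ whose face poset $\chi(X_0)$ is strongly collapsible must be a single vertex. Granting it, $X_0=*$ and hence $X \ses X_0 = *$.

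The heart of the argument, and the place where universality of simplices is indispensable, is this claim, which is the $\Delta$-complex analogue of Theorem~4.15 of \cite{BM12}. The one clean ingredient is a correspondence between beat objects of $\chi(X_0)$ and dominations in $X_0$: note first that \emph{every} beat object of $\chi(X_0)$ is an up beat object, since the proper faces of a simplex of dimension $\geq 1$ have no greatest element, so no element can be a down beat object. If moreover a \emph{vertex} $v$ is an up beat object, then the simplices strictly containing $v$ have a minimum, which is necessarily an edge $e=\{v,v'\}$ contained in every simplex through $v$; the uniqueness built into the face poset (the minimality of $e$ forces uniqueness of the required cofaces) then shows $v$ is dominated by $v'$ in the sense of Definition~\ref{dominated}, contradicting minimality of $X_0$. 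The main obstacle is that the strong collapse of $\chi(X_0)$ need not begin by removing a vertex: its minimal elements are never beat objects, so the first removable object is a positive-dimensional up beat object, and deleting it leaves the class of face posets of $\Delta$-complexes, so it cannot be read back as an operation on $X_0$. The crux is thus to organize the collapse of $\chi(X_0)$ so as to force a vertex-level domination to appear while controlling the universality of the intermediate simplices — precisely the subtlety flagged before the statement, and the step I expect to be the most delicate.
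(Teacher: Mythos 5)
Your forward implication and the reduction of the converse to a core are correct and essentially identical to the paper's own setup: the paper also passes to a core $Y$ of $X$, concludes that $\sd(Y)$ is strongly collapsible via uniqueness of cores, and your additional translation through Theorem~\ref{B_collapse} (trading collapsibility of $\sd(X)=B\chi(X)$ for collapsibility of the poset $\chi(X)$) is legitimate and non-circular, since that theorem is proved independently of this one. Your ``clean ingredient'' is also true as far as it goes: in a face poset only up beat objects can occur, and a vertex of $X_0$ that is an up beat object of $\chi(X_0)$ is dominated in $X_0$, by the coface-uniqueness property of $\Delta$-complexes.

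However, there is a genuine gap, and you have flagged it yourself: the claim that a minimal $\Delta$-complex with strongly collapsible face poset must be a single vertex is stated but not proved, and it is not a routine step --- it is the entire content of the theorem. Your vertex-level correspondence only bites if some vertex of $X_0$ is a beat object of $\chi(X_0)$ at the very first step; by your own observation this never happens when $X_0$ is minimal, so the collapse of $\chi(X_0)$ begins by removing positive-dimensional simplices, after which the intermediate posets are no longer face posets and the correspondence gives nothing. The paper resolves exactly this difficulty by an induction you would still have to supply: writing the given collapse as $\sd(Y)=Y(0)\ses Y(1)\ses\cdots\ses Y(n)=*$, it proves by induction on $i$ that the barycenters $\hat{\sigma}$ of all maximal simplices and $\hat{v}$ of all vertices of $Y$ survive into every $Y(i)$. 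For maximal $\sigma$, a domination of $\hat{\sigma}$ by some $\hat{\tau}$ in $Y(i)$ forces (via the edges $(v\prec\sigma)$ for $v\in V(\sigma)$, which lie in the full subcomplex $Y(i)$) the relations $V(\sigma)\subset V(\tau)$ and $\tau\prec\sigma$, a contradiction; for a vertex $v$, a domination of $\hat{v}$ by $\hat{\tau}$ in $Y(i)$ is converted --- in two cases, according to whether the barycenter $\hat{\sigma}$ of the test simplex still lies in $Y(i)$ or was removed at an earlier stage, and using maximal simplices as common cofaces to force the uniqueness required by Definition~\ref{dominated} --- into a domination of $v$ by a vertex $v'$ in $Y$ itself, contradicting minimality. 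Since $Y(n)$ is a single vertex yet contains $\hat{v}$ for every vertex $v$ of $Y$, one concludes $Y=*$. Until you carry out this induction, or formulate and prove an equivalent invariant carried along the collapse of the poset $\chi(X_0)$, your proposal establishes only the easy implication and the framework for the hard one.
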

	\begin{proof}
	If $X \ses *$, then $\sd(X)=B\chi(X) \ses *$ by Proposition \ref{chi>>} and \ref{B>>}. We assume that $\sd(X) \ses *$ and $Y$ is a core of $X$. A strong collapse $X \ses Y$ induces $\sd(X) \ses \sd(Y)$. Hence, $\sd(Y)$ is strongly collapsible.
	There is a sequence of elementary strong collapses 
	\[
		\sd(Y)=Y(0) \ses Y(1) \ses \ldots \ses Y(n)=*,
	\]
where $Y(i+1) = Y(i) \bs w_i$ for some dominated vertex $w_i$ of $Y(i)$ and each $i$. We note that any $Y(i)$ is a full subcomplex of $\sd(Y)$ (every simplex $\sigma$ in $\sd(Y)$ with $V(\sigma) \subset Y(i)_0$ belongs to $Y(i)$).
	We will prove by induction on $i$ that for every maximal simplex $\sigma$ and vertex $v$ of $Y$, the barycenters $\hat{\sigma}$ and $\hat{v}$ belong to $Y(i)$ as vertices. 

	For a maximal simplex $\sigma$ in $Y$, the barycenter $\hat{\sigma}$ belongs to $Y(i)$ by induction. Assume that $\hat{\sigma}$ is dominated by another vertex $\hat{\tau}$ in $Y(i)$, where $\tau$ is a simplex of $Y$. 
	There is an edge $(\tau \prec \sigma)$ belongs to $Y(i)$ because of the maximality of $\sigma$. For any vertex $v \in V(\sigma)$, the barycenter $\hat{v}$ is a vertex of $Y(i)$ by induction. Since $\hat{\sigma}$ is dominated by $\hat{\tau}$, we have $v \prec \tau \prec \sigma$ in $Y$. Every vertex of $\sigma$ belongs to $\tau$, and it implies that $\tau \not\prec \sigma$. This is a contradiction. The barycenter $\hat{\sigma}$ is not dominated in $Y(i)$. It turns out that $\hat{\sigma} \in Y(i+1)$.

	We next focus on vertices. Assume that $\hat{v}$ is a vertex of $Y(i)$ for any vertex $v$ of $Y$ by induction.
	Moreover, we assume that $\hat{v}$ is dominated by another vertex $\hat{\tau}$ in $Y(i)$, where $\tau$ is a simplex of $Y$. There is an edge $(v \prec \tau)$ in $Y(i)$ and we take a vertex $v' \in V(\tau)$ with $v' \neq v$.
	In order to prove that $v$ is dominated by $v'$ in $Y$, we take a simplex $\sigma$ of $Y$ such that $v \preceq \sigma$ and $v' \not\preceq \sigma$.  There are two cases:
	\begin{enumerate}
		\item $\hat{\sigma}$ is a vertex of $Y(i)$.
		\item $\hat{\sigma}$ is not a vertex of $Y(i)$, i.e., it is dominated in $Y(j)$ for some $j<i$.
	\end{enumerate}
	We first consider case $(1)$. Since $\hat{v}$ is dominated by $\hat{\tau}$ in $Y(i)$ and $v' \not \preceq \sigma$, we have $v \prec \sigma \prec \tau$ in $Y$. We obtain a face $\tau_{v'}$ of $\tau$ such that $\sigma \prec \tau_{v'} \preceq \tau$ with $V(\tau_{v'})=V(\sigma) \cup \{v'\}$. It remains to show the universality of $\tau_{v'}$. We take a simplex $\tau'$ with $V(\tau')=V(\tau_{v'})$ and $\sigma \prec \tau'$, and choose a maximal simplex $\rho$ with $\tau' \prec \rho$. By induction, the barycenter $\hat{\rho}$ belongs to $Y(i)$. Since $\hat{v}$ is dominated by $\hat{\tau}$ and $(v \prec \rho)$ is an edge in $Y(i)$, we have the order relations $v \prec \tau \preceq \rho$ in $Y$. The simplices $\tau_{v'}$ and $\tau'$ have a common coface $\rho$. Hence, $\tau_{v'}=\tau'$.

	In case $(2)$, let $\hat{\sigma}$ be dominated by another vertex $\hat{\rho}$ in $Y(j)$, where $\rho$ is a simplex in $Y$. We have $\sigma \prec \rho$ in $Y$ since every vertex of $\sigma$ is contained in $\rho$.
	We can take the face $\rho_{v'}$ of $\rho$ with $V(\rho)=V(\sigma) \cup \{v'\}$ and $\sigma \prec \rho_{v'}$.
	It remains to show the universality of $\rho_{v'}$. We take a simplex $\rho'$ with $V(\rho')=V(\sigma) \cup \{v'\}$ and $\sigma \prec \rho'$ in $Y$, and choose a maximal simplex $\upsilon$ with $\rho' \preceq \upsilon$. By induction, the barycenter $\hat{\upsilon}$ belongs to $Y(j)$. Since $\hat{\sigma}$ is dominated by $\hat{\rho}$ and $(\sigma \prec \upsilon)$ is an edge in $Y(j)$, we have the order relations $\sigma \prec \rho \preceq \upsilon$ in $Y$. The simplices $\rho_{v'}$ and $\rho'$ have a common coface $\upsilon$. Hence, $\rho_{v'}=\rho'$.

	Both cases contradict the minimality of $Y$. Hence, the barycenter of every vertex of $Y$ belongs to $Y(i+1)$. Finally, $Y(n)=*$ contains $\hat{v}$ for all $v \in Y_0$ and $Y=*$.
	\end{proof}

	We are now ready to prove Theorem \ref{chi_collapse}.

	\begin{proof}[Proof of Theorem \ref{chi_collapse}] 
	If $X \ses *$, then $\chi(X) \ses *$ by Proposition \ref{chi>>}. If $\chi(X) \ses *$,  Proposition \ref{B>>} implies that $\sd(X)=B\chi(X) \ses *$ and $X \ses *$ follows from Theorem \ref{sd_delta_collapse}.
	\end{proof}

	Moreover, for the barycentric subdivision $\sd(\A)=\chi(\B(\A))$ of an acyclic category $\A$, the following result is an immediate consequence of Theorem \ref{B_collapse} and \ref{chi_collapse}.

	\begin{theorem}
	Let $\A$ be an acyclic category. Then, $\A$ is strongly collapsible if and only if $\sd(\A)$ is strongly collapsible.
	\end{theorem}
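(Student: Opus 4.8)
The plan is to reduce this statement entirely to results already established, since the barycentric subdivision $\sd(\A)$ is by definition the face poset $\chi(B\A)$ of the classifying space $B\A$. The strategy is to chain together Theorem \ref{B_collapse}, which relates strong collapsibility of $\A$ to that of $B\A$, and Theorem \ref{chi_collapse}, which relates strong collapsibility of a $\Delta$-complex to that of its face poset.

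First I would unwind the definition: $\sd(\A) = \chi(B\A)$, where $B\A$ is a $\Delta$-complex. Then I would apply Theorem \ref{chi_collapse} to the $\Delta$-complex $B\A$, which asserts that $B\A$ is strongly collapsible if and only if its face poset $\chi(B\A)$ is strongly collapsible. This immediately yields that $\sd(\A)$ is strongly collapsible if and only if $B\A$ is strongly collapsible. Next I would invoke Theorem \ref{B_collapse}, which states that $\A$ is strongly collapsible if and only if $B\A$ is strongly collapsible. Composing these two equivalences gives the desired conclusion: $\A$ is strongly collapsible if and only if $\sd(\A)$ is strongly collapsible.

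There is essentially no obstacle here, as the statement is a formal consequence of the two named theorems together with the definitional identity $\sd(\A)=\chi(B\A)$. The only point requiring a moment's care is verifying that Theorem \ref{chi_collapse} applies with $X = B\A$, which is legitimate precisely because $B\A$ is a genuine $\Delta$-complex (established when the classifying space functor was introduced). A concise proof would read as follows.

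\begin{proof}
By definition, $\sd(\A)=\chi(B\A)$ is the face poset of the $\Delta$-complex $B\A$. Applying Theorem \ref{chi_collapse} to $X=B\A$, we see that $B\A$ is strongly collapsible if and only if $\chi(B\A)=\sd(\A)$ is strongly collapsible. By Theorem \ref{B_collapse}, the acyclic category $\A$ is strongly collapsible if and only if $B\A$ is strongly collapsible. Combining these two equivalences, $\A$ is strongly collapsible if and only if $\sd(\A)$ is strongly collapsible.
\end{proof}
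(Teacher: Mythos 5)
Your proof is correct and is exactly the paper's argument: the paper also deduces this theorem as an immediate consequence of Theorem \ref{B_collapse} and Theorem \ref{chi_collapse}, using the definitional identity $\sd(\A)=\chi(B\A)$. You have merely spelled out the chaining of the two equivalences, which the paper leaves implicit.
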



\begin{thebibliography}{AAA99}
		\bibitem[Bar11]{Bar11} J. A. Barmak. \textit{Algebraic topology of finite topological spaces and applications}. Lecture Notes in Mathematics, 2032. Springer, Heidelberg, 2011. xviii+170 pp.
		\bibitem[BM12]{BM12} J. Barmak; E. G. Minian. Strong homotopy types, nerves and collapses. \textit{Discrete Comput. Geom}. 47 (2012), no. 2, 301--328.
		\bibitem[EZ50]{EZ50} S. Eilenberg; J.A. Zilber. Semi-simplicial complexes and singular homology. \textit{Ann. of Math}. (2) 51, (1950). 499--513.
		\bibitem[Hat02]{Hat02} A. Hatcher. \textit{Algebraic topology}. Cambridge University Press, Cambridge, 2002. xii+544 pp.
		\bibitem[Hof74]{Hof74} G. Hoff. Cat\'egories fibr\'ees et homotopie. (French) \textit{C. R. Acad. Sci. Paris S\'er}. A 278 (1974), 223--225.
		\bibitem[Koz08]{Koz08} D. Kozlov. \textit{Combinatorial algebraic topology. Algorithms and Computation in Mathematics}, 21. Springer, Berlin, 2008. xx+389 pp.
		\bibitem[Lee73]{Lee73}M. J. Lee. Homotopy for functors. \textit{Proc. Amer. Math. Soc}. 36 (1972), 571--577; erratum, ibid. 42 (1973), 648--650.
		\bibitem[Min02]{Min02}E. G. Minian. $\mathbf{Cat}$ as a $\Lambda$-cofibration category. \textit{J. Pure Appl. Algebra} 167 (2002), no. 2-3, 301--314.
		\bibitem[RS71]{RS71} C. P. Rourke; B. J. Sanderson. $\Delta$-sets. I. \textit{Homotopy theory. Quart. J. Math. Oxford Ser}. (2) 22 (1971), 321--338.
		\bibitem[Spa66]{Spa66}E. H. Spanier. \textit{Algebraic topology}. Corrected reprint of the 1966 original. Springer-Verlag, New York, 1966. {\rm xvi}+528 pp.
		\bibitem[Sto66]{Sto66} R. E. Stong.  Finite topological spaces. \textit{Trans. Amer. Math. Soc}. 123 (1966), 325--340.
		
	\end{thebibliography}
\end{document}